\DeclareMathOperator{\dom}{dom}
\DeclareMathOperator{\prox}{prox}
\DeclareMathOperator*{\argmin}{argmin}
\newcommand{\grad}{\nabla}
\let\S\undefine
\let\SS\undefine
\newcommand{\tp}{{\scriptscriptstyle\mathsf{T}}}
\newcommand{\qS}{{\scriptscriptstyle\mathsf{qS}}}
\newcommand{\qSBB}{{\scriptscriptstyle\mathsf{qSBB}}}
\newcommand{\S}{{\scriptscriptstyle\mathsf{S}}}
\newcommand{\SBB}{{\scriptscriptstyle\mathsf{SBB}}}
\newcommand{\SSBB}{{\scriptscriptstyle\mathsf{SSBB}}}
\newcommand{\SS}{{\scriptscriptstyle\mathsf{SS}}}
\theoremstyle{definition}
\newtheorem{theorem}{Theorem}[section]
\newtheorem{lemma}[theorem]{Lemma}
\newtheorem{corollary}[theorem]{Corollary}
\newtheorem{proposition}[theorem]{Proposition}
\newtheorem{assumption}[theorem]{Assumption}
\numberwithin{equation}{section}
\begin{document}

\title{Stochastic Steffensen Method}
\author{Minda Zhao}
\author{Zehua Lai}
\author{Lek-Heng Lim}
\address{Computational and Applied Mathematics, University of Chicago, Chicago, IL 60637}
\email{mindazhao@uchicago.edu, laizehua@uchicago.edu, lekheng@uchicago.edu}
\begin{abstract}
Is it possible for a first-order method, i.e., only first derivatives allowed, to be quadratically convergent? For univariate loss functions, the answer is yes --- the \emph{Steffensen method} avoids second derivatives and is still quadratically convergent like Newton method. By incorporating an optimal step size we can even push its convergence order beyond quadratic to $1+\sqrt{2} \approx 2.414$. While such high convergence orders are a pointless overkill for a deterministic algorithm, they become rewarding when the algorithm is randomized for problems of massive sizes, as randomization invariably compromises convergence speed. We will introduce two adaptive learning rates inspired by the Steffensen method, intended for use in a stochastic optimization setting and requires no hyperparameter tuning aside from batch size. Extensive experiments show that they compare favorably with several existing first-order methods. When restricted to a quadratic objective, our stochastic Steffensen methods reduce to randomized Kaczmarz method --- note that this is not true for SGD or SLBFGS --- and thus we may also view our methods as a generalization of randomized Kaczmarz to arbitrary objectives.
\end{abstract}
\maketitle

\section{Introduction}\label{sec:intro}

In minimizing a \emph{univariate} function $f$ with an iteration $x_{k+1} = x_k - f'(x_k)/g(x_k)$, possibilities for $g$ include
\begin{alignat*}{7}
&\text{gradient:}\quad & g(x_k) &= 1,  & &\text{Newton:} &g(x_k) &= f''(x_k),\\
&\text{secant:}  &g(x_k) &= \frac{f'(x_k) - f'(x_{k-1})}{x_k - x_{k-1}},   \qquad &&\text{Steffensen:} \quad &g(x_k) &= \frac{f'(x_k + f'(x_k)) - f'(x_k)}{f'(x_k)} ,
\end{alignat*}
with different orders of convergence $q$, i.e., $\lvert x_{k+1} - x^* \rvert \le c\lvert x_k - x^* \rvert^q$. Gradient descent has $q =1$, secant method $q  = (1+\sqrt{5})/2$, Newton and Steffensen methods both have $q = 2$.

Steffensen method \cite{Steff, Steff2} is a surprise. Not only does it not require second derivatives (like Newton) to achieve quadratic convergence, it also does not achieve its superior convergence through the use of multisteps (like secant). In other words, the $k$th Steffensen iterate only depends on $x_k$ but not $x_{k-1}, x_{k-2}$, etc.

Nevertheless, while the other three methods have widely used multivariate generalizations (secant method has several, as quasi-Newton methods, as Barzilai--Borwein step size, etc), all existing multivariate generalizations of Steffensen method \cite{Amat2016steffensen,
Ezquerro2014hybrid,Henrici,
Huang1970unified,
Johnson1968steffensen,
Nedzhibov2011approach,
Nievergelt1991Aitken,
Nievergelt1995condition,
Noda1981Aitken1,
Noda1986Aitken2,
Noda1986Aitken3,
Noda1990Aitken4,
Noda1992Aitken5} involve multivariate \emph{divided differences} that require $O(n^2)$ function evaluations and are no less expensive than using the full Hessian. Furthermore these multivariate generalizations are no longer one-step methods. As a result they have not found widespread use.

Our contributions are as follows:
\begin{enumerate}[\upshape (i)]
\item We show that by incorporating an optimal step size parameter the convergence of Steffensen method may be further improved beyond quadratic to $q = 1 + \sqrt{2}$.

\item We extend Steffensen method to a multivariate setting as an adaptive learning rate, avoiding divided differences, requiring just two gradient evaluations, and remaining a one-step method.

\item\label{it:rand} We show that when used in a randomized setting, our methods outperform SGD, SVRG, and SLBFGS on a variety of standard machine learning tasks on real data sets.
\end{enumerate}
The performance in \eqref{it:rand} is measured in actual running time. But aside from speed, our methods have two advantages over SLBFGS, which has become a gold standard in machine learning.
\begin{enumerate}[\upshape (a)]
\item Quasi-Newton methods may involve matrix-vector product, a two-loop recursion with $O(d^2)$ computation. Although deterministic LBFGS does not form matrix-vector product explicitly, stochastic LBFGS does. Our multivariate Steffensen method, whether deterministic or stochastic, is free of such products. 

\item Quasi-Newton methods come in two flavors: Hessian or inverse Hessian updates. The latter seems  a nobrainer as it avoids matrix inversion but this is a fallacy. It is common knowledge among practitioners \cite[Section~4.5.2.2]{gill2019practical} that the inverse Hessian version often conceals an ill-conditioned approximate Hessian; one should instead update the Cholesky factors of the approximate Hessian in order to detect ill-conditioning. By its design, LBFGS  inevitably uses the inverse Hessian version. Our multivariate Steffensen methods are not quasi-Newton methods and do not involve approximate Hessians, avoiding this issue entirely.
\end{enumerate}

Johan Steffensen first proposed his eponymous method  \cite{Steff} in 1933. See \cite{Brez} for an informative history of the method and a biography of its inventor. The method was described in the classic books of Henrici \cite[pp.~91--95]{Henrici} and Householder \cite[p.~164]{House}  but has remained more of a textbook curiosity. One reason, as we mentioned above and will elaborate in Section~\ref{sec:multvar}, is that there has been no viable multivariate version.

Another reason, as we will speculate, is that much like the Kaczmarz method \cite{Kaczmarz,KaczEng} for iterative solution of linear systems had lingered in relative obscurity until it was randomized \cite{strohmer2009randomized}, Steffensen method is also most effective in a randomized setting. This is in fact more than an analogy; we will show in Section~\ref{sec:kacz} that the stochastic Steffensen method we propose reduces to randomized Kaczmarz method when applied to a quadratic objective --- not true for SGD, SVRG, or SLBFGS. So one may also view our stochastic Steffensen method as a generalization of randomized Kaczmarz method to arbitrary differentiable objective functions. In Section~\ref{sec:nondiff}, we show that differentiability may be dropped and in Section~\ref{sec:conv} we supply proofs of linear convergence.

Stochastic optimization has grown into a vast subject. We have limited our comparison in this article to stochastic variants of classical methods that rely primarily on \emph{gradients}. In the numerical experiments in Section~\ref{sec:num}, we will see that the stochastic Steffensen methods compare favorably with SGD, SVRG (with or without Barzilai--Borwein step size), and SLBFGS across different tasks in the \texttt{LIBSVM} datasets: ridge regression, logistic regression, and support vector machines with squared hinge loss. We did not include more sophisticated stochastic optimization algorithms that bring in additional features like \emph{moments} \cite{duchi2011adaptive,hinton2012neural,kingma2014adam} or \emph{momentum} \cite{Nesterov,Poljak,qian1999momentum,reddi2019convergence} for two reasons. Firstly these more sophisticated algorithms invariably require heavy tuning compared to purely gradient-based methods. Secondly we view them as enhancements to gradients-based methods and our proposed stochastic Steffensen methods likewise lend themselves to such enhancements. As such, the most appropriate and equitable comparisons for us would be the aforementioned gradient-based methods.

\subsection*{Background}

As in the usual setting for stochastic gradient descent and its variants, our goal is to minimize an objective function of the form
\begin{equation}\label{large sum}
f(x)= \frac{1}{n} \sum_{i=1}^n f_i(x),
\end{equation}
where $x\in \mathbb{R}^d$ is the model parameter. Such functions are ubiquitous in machine learning, arising from the emperical risk minimization (ERM) problem where $f_i$ takes the form
\[
f_i(x) = \ell(w_i^\tp x; y_i) + \lambda R(x),
\]
with $\ell:\mathbb{R}\times\mathbb{R}\to\mathbb{R}_+$ the loss function, $R: \mathbb{R}^d\to \mathbb{R}_+$ the regularizer, $\lambda \ge 0$ the regularization parameter, and $\{(w_i,y_i)\in \mathbb{R}^d \times \mathbb{R} :i=1,\dots, n\}$ the training set with labels. Different choices of $\ell$ and $R$ give $l^2$-regularized logistic regression, lasso regression, soft-margin support vector machine, etc.

The challenge here is that the dimension $d$ and sample size $n$ are extremely large in modern situations, mandating the use of \emph{first-order methods} that rely only on first derivatives. But when $n$ is large, even computing the full gradient of all $f_1,\dots, f_n$ is intractable, and we need \emph{stochastic optimization methods} that update $x$ only after processing a small subset of data, permitting progress in the time deterministic methods make only a single step. Consequently, stochastic first-order methods have become the method of choice, with stochastic gradient descent (SGD) \cite{Robbins1951sto} and its many variants \cite{roux2012stochastic, defazio2014saga, johnson2013accelerating} and various stochastic quasi-Newton methods \cite{moritz2016linearly, Byrd2016A, zhao2018stochastic} ruling the day.

\subsection*{Conventions}

In this article, we use the terms \emph{learning rate} and \emph{step size} slightly differently. Take for example our Steffensen--Barzilai--Borwein iteration in \eqref{eq:SBB}:
\[
x_{k+1} = x_k - \frac{\beta_k \|\grad f(x_k)\|^2}{[\grad f(x_k + \beta_k \grad f(x_k)) - \grad f(x_k)]^\tp \grad f(x_k)} \grad f(x_k),
\]
the coefficient
\[
\eta^{\SBB}_k \coloneqq \frac{\beta_k \|\grad f(x_k)\|^2}{[\grad f(x_k + \beta_k \grad f(x_k)) - \grad f(x_k)]^\tp \grad f(x_k)}
\]
will be called a learning rate whereas the coefficient
\[
\beta_k \coloneqq \frac{\|x_k - x_{k-1}\|^2}{[\grad f(x_k) - \grad f(x_{k-1})]^\tp(x_k - x_{k-1})}
\]
will be called a step size. In general, the term `learning rate' will be used exclusively to refer to the coefficient of a search direction, which may be a gradient, a stochastic gradient, a variance-reduced stochastic gradient, etc. The term `step size' will be used for coefficients in other contexts like $\beta_k$ in the definition of the learning rate $\eta^{\SBB}_k$.

We will use $\eta_k$ to denote a general learning rate.  For the learning rate of a particular algorithm, we will indicate the algorithm in superscript. For example, $\eta^{\SBB}_k$ above is the learning rate of Steffensen--Barzilai--Borwein method (SBB). The Barzilai--Borwein step size above will always be denoted $\beta_k$ throughout.

\section{Stochastic Multivariate Steffensen Methods}\label{sec:derive}

Our three-step strategy is to (a) push the convergence order of the univariate Steffensen method to its limit, (b) extend the resulting method to a multivariate setting, and then (c) randomize the multivariate algorithm. For (a), we are led naturally to the Barzilai--Borwein step size; for (b), we emulate the multivariate extension of secant method into quasi-Newton method; and for (c), we draw inspiration from stochastic gradient descent and its various derivatives.

\subsection{Deterministic univariate setting}\label{sec:uni}

As we saw in Section~\ref{sec:intro}, univariate Steffensen method:
\begin{equation}\label{uniSteff}
x_{k+1} = x_k - \frac{f'(x_k)^2}{f'(x_k + f'(x_k)) - f'(x_k)}
\end{equation}
avoids second-order derivatives and yet preserves quadratic convergence with the use of two first-order derivatives $f'(x_k + f'(x_k))$ and $f'(x_k)$.  With modern hindsight, it is clear that we may obtain an immediate improvement in \eqref{uniSteff}, one that is essentially free, by incorporating a coefficient $\beta_k$ that only depends on quantities already computed. The analysis in the next two results will lead us to an appropriate choice of $\beta_k$. Note that although the algorithms require only first derivatives of $f$, the convergence results assume that $f$ has a higher degree of smoothness.

\begin{proposition}[Convergence order of Steffensen method]\label{univariate steffensen}
Let $f$ be a function that is $C^3$ in a neighborhood of a stationary point $x^*$ with $f'(x^*) = 0$ and $f''(x^*) \neq 0$. Let  $\alpha \in \mathbb{R}$ be a nonzero constant parameter and
\[
x_{k+1} \coloneqq x_k - \frac{\alpha f'(x_k)^2}{f'\bigl(x_k + \alpha f'(x_k)\bigr) - f'(x_k)}
\]
for $k = 0,1,2,\dots.$ If $\lim_{k \to \infty} x_k = x^*$, then
\[
\lim_{k\to\infty}\frac{|\varepsilon_{k+1}|}{|\varepsilon_k^2|}=\frac{1}{2}\left|\frac{f'''(x^*)}{f''(x^*)}\right|\left|1 + \alpha f''(x^*)\right|,
\]
where $\varepsilon_k \coloneqq x_k - x^*$ denotes the error in iteration $k$.
\end{proposition}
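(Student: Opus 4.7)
The plan is to Taylor expand every function in sight around $x^*$ to second order in $\varepsilon_k$, and read off the leading coefficient of the inevitable quadratic $\varepsilon_{k+1} = O(\varepsilon_k^2)$. Write $a := f''(x^*)$ and $c := f'''(x^*)$. Since $f \in C^3$ near $x^*$ with $f'(x^*)=0$, Taylor's theorem first gives
\[
f'(x_k) = a\varepsilon_k + \tfrac{1}{2}c\,\varepsilon_k^2 + o(\varepsilon_k^2).
\]
Because $a \neq 0$ and $\alpha \neq 0$, the auxiliary point $y_k := x_k + \alpha f'(x_k)$ then satisfies $y_k - x^* = (1+\alpha a)\varepsilon_k + \tfrac{1}{2}\alpha c\,\varepsilon_k^2 + o(\varepsilon_k^2) = O(\varepsilon_k)$, so applying Taylor's theorem a second time to $f'$ at $y_k$ produces a polynomial in $\varepsilon_k$ with remainder $o(\varepsilon_k^2)$.

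Next I would compute the two halves of the update. The numerator squares out to $\alpha f'(x_k)^2 = \alpha a^2 \varepsilon_k^2 + \alpha a c\,\varepsilon_k^3 + o(\varepsilon_k^3)$. The denominator enjoys the cancellation
\[
f'(y_k) - f'(x_k) = \alpha a^2\,\varepsilon_k + \tfrac{1}{2}\alpha a c (3 + \alpha a)\,\varepsilon_k^2 + o(\varepsilon_k^2),
\]
where the quadratic coefficient comes from $\tfrac{1}{2}c[(1+\alpha a)^2 - 1] + \alpha a \cdot \tfrac{1}{2}c$ upon expanding $(1+\alpha a)^2 - 1 = 2\alpha a + \alpha^2 a^2$. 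Since the leading term $\alpha a^2 \varepsilon_k$ is nonzero for $k$ large, the iteration is well-defined, and I can factor $\varepsilon_k$ out of the numerator and $\alpha a^2$ out of the denominator, then apply $(1+u)^{-1} = 1 - u + o(u)$:
\[
\frac{\alpha f'(x_k)^2}{f'(y_k) - f'(x_k)} = \varepsilon_k \left[1 + \frac{c}{a}\varepsilon_k - \frac{c(3+\alpha a)}{2a}\varepsilon_k + o(\varepsilon_k)\right] = \varepsilon_k - \frac{c(1+\alpha a)}{2a}\,\varepsilon_k^2 + o(\varepsilon_k^2).
\]
Substituting into $\varepsilon_{k+1} = \varepsilon_k - \alpha f'(x_k)^2/[f'(y_k) - f'(x_k)]$ produces the clean cancellation $\varepsilon_{k+1} = \tfrac{c(1+\alpha a)}{2a}\,\varepsilon_k^2 + o(\varepsilon_k^2)$, and dividing by $|\varepsilon_k|^2$ and letting $k\to\infty$ gives the stated limit $\tfrac{1}{2}|f'''(x^*)/f''(x^*)|\cdot|1+\alpha f''(x^*)|$.

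There is no conceptual obstacle here; the only thing to be vigilant about is bookkeeping. The $o(\cdot)$ remainders must be propagated consistently through expansion, subtraction, and the division step (which is the most error-prone), and the $o((y_k-x^*)^2)$ term in Taylor's theorem applied at $y_k$ must translate into $o(\varepsilon_k^2)$ — which it does because $y_k - x^* = O(\varepsilon_k)$. The critical cancellation that makes everything work is that the linear-in-$\varepsilon_k$ contributions to $\varepsilon_k - \alpha f'(x_k)^2/[f'(y_k)-f'(x_k)]$ vanish exactly, reflecting the known fact that Steffensen's method is at least quadratically convergent; the parameter $\alpha$ enters only through the coefficient of $\varepsilon_k^2$, so in principle one could kill that coefficient entirely by choosing $\alpha = -1/f''(x^*)$, which this calculation makes transparent.
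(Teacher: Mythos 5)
Your proposal is correct and follows essentially the same route as the paper: a quadratic Taylor expansion of $f'$ at the two evaluation points, followed by reading off the coefficient of $\varepsilon_k^2$ in the error recursion. The only difference is cosmetic --- you expand about $x^*$ with Peano $o(\varepsilon_k^2)$ remainders and divide asymptotic series, while the paper expands about $x_k$ with Lagrange remainders at intermediate points $\xi_k,\xi_k^*$ and passes to the limit at the end; your bookkeeping (including the translation of $o((y_k-x^*)^2)$ into $o(\varepsilon_k^2)$ and the cancellation giving $\varepsilon_{k+1}=\tfrac{c(1+\alpha a)}{2a}\varepsilon_k^2+o(\varepsilon_k^2)$) checks out.
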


\begin{proof}
Let $\varepsilon_k = x_k - x^*$. Subtracting $x^*$ from both sides, we get
\[
\varepsilon_{k+1} = \varepsilon_{k} - \frac{\alpha f'(x_k)^2}{f'(x_k + \alpha f'(x_k)) - f'(x_k)}.
\]
Taylor expanding $f'(x_k + \alpha f'(x_k))$ about $x_k$, we get
\[
f'(x_k + \alpha f'(x_k)) = f'(x_k) + f''(x_k)\alpha f'(x_k) + \frac{f'''(\xi_k)}{2}\alpha^2f'(x_k)^2
\]
for some $\xi_k$  between $x_k$ and $x_k + \eta f'(x_k)$. Combining the previous two equations, we have
\begin{equation}\label{equ:iter1}
\varepsilon_{k+1} = \varepsilon_{k} - \frac{f'(x_k)}{f''(x_k) + \frac{f'''(\xi_k)}{2}\alpha f'(x_k)} = \frac{-f'(x_k) + f''(x_k)\varepsilon_k + \frac12 f'''(\xi_k)\alpha f'(x_k)\varepsilon_k}{f''(x_k) + \frac12 f'''(\xi_k)\alpha f'(x_k)}.
\end{equation}
Taylor expanding $f'$ about $x_k$, we get
\[
0 = f'(x^*) = f'(x_k) - f''(x_k)\varepsilon_k + \frac{f'''(\xi_k^*)}{2}\varepsilon_k^2
\]
for some $\xi_k^*$ between $x_k$ and $x^*$. Plugging $f'(x_k)$ into \eqref{equ:iter1} gives us
\[
\varepsilon_{k+1} = \frac{f'''(\xi_k^*)\varepsilon_k^2 + \alpha f'''(\xi_k)f''(x_k)\varepsilon_k^2 - \frac{\alpha}{2} f'''(\xi_k)f'''(\xi_k^*)\varepsilon_k^3}{2f''(x_k) + f'''(\xi_k)\alpha f'(x_k)}.
\]
Taking limit $k \to \infty$ and using continuity of $f'$, $f''$, and $f'''$ at $x^*$, we have
\begin{align*}
\lim_{k\to\infty}\frac{|\varepsilon_{k+1}|}{|\varepsilon_k^2|} &= \lim_{k\to\infty}\biggl|\frac{f'''(\xi_k^*) + \alpha f'''(\xi_k)f''(x_k) - \frac{\alpha}{2} f'''(\xi_k)f'''(\xi_k^*)\varepsilon_k}{2f''(x_k) + f'''(\xi_k)\alpha f'(x_k)}\biggr|\\
& =\frac{1}{2}\biggl|\frac{f'''(x^*)}{f''(x^*)}\biggr| |1 + \alpha f''(x^*)|
\end{align*}
as required.
\end{proof}

We next show that with an appropriate choice of $\alpha$, we can push Steffensen method into the superquadratically convergent regime. The quadratic convergence in Proposition~\ref{univariate steffensen} is independent of the value $\alpha$ and we may thus choose a different $\alpha$ at every step. Of course if we simply set $\alpha_k = -1/f''(x_k)$ in Proposition~\ref{univariate steffensen}, we will obtain a cubically convergent algorithm. However since we want a first-order method whose learning rate depends only on previously computed quantities, we set $\alpha_k = -(x_k - x_{k-1})/[f'(x_k) - f'(x_{k-1})]$ to be the finite difference to avoid second derivatives --- as it turns out, this improves convergence order to $1 + \sqrt{2}$.

\begin{theorem}[Convergence order of Steffensen method with Barzilai--Borwein step size]\label{superQ}
Let $f$ be a function that is $C^4$ in a neighborhood of a stationary point $x^*$ with $f'(x^*) = 0$ and $f''(x^*) \neq 0$.  Let
\[
\beta_k =-\frac{x_k - x_{k-1}}{f'(x_k) - f'(x_{k-1})}
\]
and
\begin{equation}\label{eq:bbs}
x_{k+1} = x_k - \frac{\beta_k f'(x_k)^2}{f'\bigl(x_k + \beta_k f'(x_k)\bigr) - f'(x_k)}
\end{equation}
for $k =0,1,2,\dots.$ If $\lim_{k \to \infty} x_k \to x^*$, then
\[
\lim_{k\to\infty} \frac{|\varepsilon_{k+1}|}{|\varepsilon_k^2\varepsilon_{k-1}|} = \Bigl(\frac{f'''(x^*)}{2f''(x^*)}\Bigr)^2.
\]
In particular, the order of convergence of \eqref{eq:bbs} is superquadratic with $1+\sqrt{2} \approx 2.414$.
\end{theorem}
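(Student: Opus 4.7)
The plan is to reuse the exact error identity derived in the proof of Proposition~\ref{univariate steffensen}, but now with the variable coefficient $\alpha = \beta_k$:
\[
\varepsilon_{k+1} = \frac{f'''(\xi_k^*)\varepsilon_k^2 + \beta_k f'''(\xi_k)f''(x_k)\varepsilon_k^2 - \frac{\beta_k}{2} f'''(\xi_k)f'''(\xi_k^*)\varepsilon_k^3}{2f''(x_k) + f'''(\xi_k)\beta_k f'(x_k)}.
\]
Since this formula held for any constant $\alpha$, it still holds pointwise for any $k$ with $\alpha=\beta_k$. The denominator tends to $2f''(x^*)$ because $\beta_k\to -1/f''(x^*)$ and $f'(x_k)\to 0$, so the game is to extract the leading behavior of the numerator. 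The apparent $\varepsilon_k^2$ term does not dominate: the combination $f'''(\xi_k^*)+\beta_k f'''(\xi_k)f''(x_k)$ factors asymptotically as $f'''(x^*)\bigl[1+\beta_k f''(x_k)\bigr]+O(\varepsilon_k)$, and the bracket tends to zero. The main task is therefore to pin down the next-order expansion of $1+\beta_k f''(x_k)$.

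For this I would Taylor expand both $f'(x_k)$ and $f'(x_{k-1})$ about $x^*$ up to cubic order, giving
\[
\frac{f'(x_k)-f'(x_{k-1})}{x_k-x_{k-1}} = f''(x^*) + \tfrac12 f'''(x^*)(\varepsilon_k+\varepsilon_{k-1}) + O(\varepsilon_{k-1}^2),
\]
so that $\beta_k = -1/f''(x^*)\cdot\bigl[1 - \tfrac{f'''(x^*)}{2f''(x^*)}(\varepsilon_k+\varepsilon_{k-1})\bigr]+O(\varepsilon_{k-1}^2)$. Combining this with $f''(x_k)=f''(x^*)+f'''(x^*)\varepsilon_k+O(\varepsilon_k^2)$, a short cancellation yields the clean identity
\[
1+\beta_k f''(x_k) = -\frac{f'''(x^*)}{2f''(x^*)}(\varepsilon_k-\varepsilon_{k-1}) + O(\varepsilon_{k-1}^2).
\]
By Proposition~\ref{univariate steffensen} we already know $\varepsilon_k=O(\varepsilon_{k-1}^2)$, so $\varepsilon_k-\varepsilon_{k-1}=-\varepsilon_{k-1}(1+o(1))$, and hence $1+\beta_k f''(x_k)\sim \tfrac{f'''(x^*)}{2f''(x^*)}\varepsilon_{k-1}$.

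Substituting back, the numerator behaves like $\tfrac{f'''(x^*)^2}{2f''(x^*)}\varepsilon_k^2\varepsilon_{k-1}$ to leading order (the remaining error terms are $O(\varepsilon_k^2\varepsilon_{k-1}^2)$ and $O(\varepsilon_k^3)$, both negligible compared to $\varepsilon_k^2\varepsilon_{k-1}$), while the denominator tends to $2f''(x^*)$. Dividing gives
\[
\lim_{k\to\infty}\frac{|\varepsilon_{k+1}|}{|\varepsilon_k^2\varepsilon_{k-1}|} = \Bigl(\frac{f'''(x^*)}{2f''(x^*)}\Bigr)^2,
\]
as claimed. The convergence-order statement is then standard: plugging the ansatz $|\varepsilon_{k+1}|\sim A|\varepsilon_k|^q$ into the asymptotic recurrence $|\varepsilon_{k+1}|\asymp|\varepsilon_k|^2|\varepsilon_{k-1}|$ forces $q=2+1/q$, whose positive root is $q=1+\sqrt{2}$.

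I expect the only real obstacle to be bookkeeping in the second step: one must expand $\beta_k$ to \emph{one order past} the obvious leading term $-1/f''(x^*)$ so that the cancellation in $1+\beta_k f''(x_k)$ is captured correctly, and one must justify the use of $\varepsilon_k=O(\varepsilon_{k-1}^2)$ (available from Proposition~\ref{univariate steffensen}) to replace $\varepsilon_k-\varepsilon_{k-1}$ by $-\varepsilon_{k-1}$ asymptotically without circular reasoning. Everything else is a careful limit computation.
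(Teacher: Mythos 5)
Your proposal is correct and follows essentially the same route as the paper's proof: both hinge on isolating the factor $1+\beta_k f''(x_k)$, showing via a Taylor expansion of the Barzilai--Borwein quotient that it equals $-\tfrac{f'''(x^*)}{2f''(x^*)}(\varepsilon_k-\varepsilon_{k-1})+O(\varepsilon_{k-1}^2)\sim\tfrac{f'''(x^*)}{2f''(x^*)}\varepsilon_{k-1}$, and then using $\varepsilon_k/\varepsilon_{k-1}\to 0$ (which, as you note, follows non-circularly from the pointwise quadratic-error identity with bounded $\beta_k$) to discard the remaining terms. The only differences are cosmetic: the paper centers its expansions at $x_k$ and carries an explicit fourth-order remainder, while you center at $x^*$ and absorb the same information into the Lipschitz continuity of $f'''$ guaranteed by the $C^4$ hypothesis.
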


\begin{proof}
Taylor expanding $f'(x_k+\beta_kf'(x_k))$ at $x_k$, we get
\[
f'(x_k + \beta_k f'(x_k)) =f'(x_k) + f''(x_k)\beta_k f'(x_k) +\frac{f^{(3)}(x_k)}{2}\beta_k^2f'(x_k)^2 + \frac{f^{(4)}(\xi_k)}{6}\beta_k^3f'(x_k)^3
\]
for some $\xi_k$ between $x_k$ and $x_k + \eta_kf'(x_k)$. Let $\varepsilon_k = x_k - x^*$, we have
\begin{equation}\label{equ:iter2}
\begin{aligned}
\varepsilon_{k+1} &= \varepsilon_{k} - \frac{f'(x_k)}{f''(x_k) + \frac12 f^{(3)}(x_k) \beta_k f'(x_k) + \frac16 f^{(4)}(\xi_k)\beta_k^2f'(x_k)^2}\\
&= \frac{-f'(x_k)+f''(x_k)\varepsilon_k + \frac12 f^{(3)}(x_k)\beta_kf'(x_k)\varepsilon_k+\frac16 f^{(4)}(\xi_k) \beta_k^2f'(x_k)^2\varepsilon_k}{f''(x_k) + \frac12 f^{(3)}(x_k) \beta_k f'(x_k) + \frac16 f^{(4)}(\xi_k)\beta_k^2f'(x_k)^2}.
\end{aligned}
\end{equation}
Taylor expanding $f'(x^*)$ at $x_k$ to $4$th, $3$th, and $2$nd order, we get
\begin{align*}
0 = f'(x^*) &= f'(x_k) - f''(x_k)\varepsilon_k + \frac{f^{(3)}(x_k)}{2}\varepsilon_k^2 -\frac{f^{(4)}(\xi_k^*)}{6}\varepsilon_k^3,\\
0 = f'(x^*) &= f'(x_k) - f''(x_k)\varepsilon_k + \frac{f^{(3)}(\xi_k')}{2}\varepsilon_k^2,\\
0 = f'(x^*) &= f'(x_k) - f''(\xi_k^\dagger)\varepsilon_k.
\end{align*}
Plugging these into \eqref{equ:iter2} and defining
\begin{align*}
A_k &\coloneqq f''(x_k) + \frac{f^{(3)}(x_k)}{2}\beta_k f'(x_k) + \frac{f^{(4)}(\xi_k)\beta_k^2f'(x_k)^2}{6},\\
B_k &\coloneqq \frac{f^{(4)}(\xi_k)}{6}\beta_k^2f''(\xi_k^\dagger)^2\varepsilon_k^3 - \frac{f^{(4)}(\xi_k^*)}{6}\varepsilon_k^3 - \frac{f^{(3)}(x_k)}{4}f^{(3)}(\xi_k')\beta_k\varepsilon_k^3,
\end{align*}
we obtain
\[
\varepsilon_{k+1} = \frac{\frac12 f^{(3)}(x_k)\varepsilon_k^2(f''(x_k)\beta_k+1) + B_k}{A_k}.
\]
Since $\beta_k = -(x_k - x_{k-1})/(f'(x_k) - f'(x_{k-1}))$, we may Taylor expand $f'(x_{k-1})$ at $x_k$ to get
\begin{align*}
f'(x_{k-1}) &= f'(x_k) + f''(x_k)(\varepsilon_{k - 1} - \varepsilon_k) +\frac{f^{(3)}(\xi_k^\ddagger)}{2}(\varepsilon_{k-1} - \varepsilon_k)^2
\end{align*}
for some $\xi_k^\ddagger$ between $x_{k-1}$ and $x_k$. Plugging it into
\[
\beta_k = -\frac{1}{f''(x_k)+\frac12 f^{(3)}(\xi_k^\ddagger)(\varepsilon_{k-1} - \varepsilon_k)}
\]
gives us
\[
\varepsilon_{k+1}
=\frac{\dfrac{f^{(3)}(x_k)f^{(3)}(\xi_k^\ddagger)\varepsilon_k^2(\varepsilon_{k-1} - \varepsilon_k)}{2(2f''(x_k)+f^{(3)}(\xi_k^\ddagger)(\varepsilon_{k-1} - \varepsilon_k))} + B_k}{A_k}.
\]
We deduce that
\[
\lim_{k\to\infty}\frac{|\varepsilon_{k}|}{|\varepsilon_{k-1}|} = 0, \qquad
\lim_{k\to\infty}\frac{|B_k|}{|\varepsilon_k^2\varepsilon_{k-1}|} = 0,
\]
and therefore
\[
\lim_{k\to\infty} \frac{|\varepsilon_{k+1}|}{|\varepsilon_k^2\varepsilon_{k-1}|} = \Bigl(\frac{f^{(3)}(x^*)}{2f^{(2)}(x^*)}\Bigr)^2.
\]
Hence the convergence order is $1 + \sqrt{2}$.
\end{proof}

The choice of $\beta_k$ above is exactly the Barzilai--Borwein (BB) step size for a univariate function \cite{Barzilai1988two}.  In the multivariate setting, $\beta_k$ will be replaced by the multivariate BB step size.
Theorem~\ref{superQ} provides the impetus for a first-order method with Steffensen updates and BB step size, namely, it is superquadratically convergent for univariate functions.
Such a high convergence order is clearly an overkill for a deterministic algorithm but our experiments in Section~\ref{sec:num} show that they are rewarding when the algorithm is randomized, as randomization inevitably compromises convergence speed. For easy comparison, we tabulate the convergence order, i.e., the largest $q$ such that $\lvert \varepsilon_{k+1} \rvert \le c\lvert \varepsilon_k \rvert^q$ for some $c >0$ and all $k$ sufficiently large, of various methods below:
\begin{center}
\begin{tabular}{l|c|c|l}
Method & Convergence & Derivatives & Steps\\\hline
Steepest descent & $1 $ & $1$st & single step \\
Secant = Barzilai--Borwein = quasi-Newton & $(1 + \sqrt{5})/2 $ & $1$st & mutltistep\\
Newton & $2 $ & $2$nd & single step \\
Steffensen & $2 $ & $1$st & single step \\
Steffensen--Barzilai--Borwein & $1 + \sqrt{2} $ & $1$st & multistep
\end{tabular}
\end{center}
Note that for a univariate function, Barzilai--Borwein step size and any quasi-Newton method with Broyden class updates (including  BFGS, DFP, SR1) reduce to the secant method. In particular, they are all two-step methods, i.e., its iterate at step $k$ depends on both $x_k$ and $x_{k-1}$. As a result Steffensen--Barzilai--Borwein method is also a two-step method as it involves the Brazlai--Borwein step size but Steffensen method is a one-step method.

\subsection{Deterministic multivariate setting}\label{sec:multvar}

There have been no shortage of proposals for extending Steffensen method to a multivariate or even infinite-dimensional setting \cite{Amat2016steffensen,
Ezquerro2014hybrid,Henrici,
Huang1970unified,
Johnson1968steffensen,
Nedzhibov2011approach,
Nievergelt1991Aitken,
Nievergelt1995condition,
Noda1981Aitken1,
Noda1986Aitken2,
Noda1986Aitken3,
Noda1990Aitken4,
Noda1992Aitken5}. However all of them rely on various multivariate versions of \emph{divided differences} that require evaluation and storage of $O(n^2)$ first derivatives in each step. Although they do avoid second derivatives, computationally they are just as expensive as Newton method and are unsuitable for modern large scale applications like training deep neural networks.

We will propose an alternative class of multivariate  Steffensen methods that use only $O(n)$ first derivatives, by emulating quasi-Newton methods \cite{Broyden1970The, fletcher1970new, Goldfarb1970A, Shanno1970Condition} and Barzilai--Borwein method \cite{Barzilai1988two} respectively. Our observation is that expensive multivariate divided differences can be completely avoided if we just use the ideas in Section~\ref{sec:uni} to \emph{define learning rates}. Another advantage is that these learning rates could be readily used in conjunction with existing stochastic optimization methods, as we will see in Section~\ref{sec:stoc}.

The key idea behind quasi-Newton method is the extension of univariate secant method to  a multivariate objective function $f:\mathbb{R}^d\rightarrow\mathbb{R}$ by replacing the finite difference approximation of $f''(x_k)$, i.e.,
$h_k = [f'(x_k) - f'(x_{k-1})]/(x_k - x_{k-1})$,
with the \emph{secant equation} $H_k s_k = y_k$ or
\begin{equation}\label{secant equation}
B_k y_k = s_k
\end{equation}
where $s_k = x_k - x_{k-1}$ and $y_k = \grad f(x_k) - \grad f(x_{k-1})$, avoiding the need to divide vectorial quantitites. Here $H_k$ (resp.\ $B_k$) is the approximate (resp.\ inverse) Hessian.

We use the same idea to extend Steffensen method to a multivariate setting, solving \eqref{secant equation} with
\[
s_k = \grad f(x_k), \qquad y_k = \grad f(x_k + \grad f(x_k)) - \grad f(x_k).
\]
Note that with these choices, \eqref{secant equation} roughly says that ``$B_k = s_k/y_k = \grad f(x_k)/[\grad f(x_k + \grad f(x_k)) - \grad f(x_k) ]$,'' which gives us $f'(x_k)/[f'(x_k + f'(x_k)) - f'(x_k)]$ as in the univariate Steffensen method when $d=1$ but is of course meaningless when $d > 1$. 
Nevertheless we may pick a minimum-norm solution to \eqref{secant equation}, which is easily seen to be given by the rank-one matrix
\[
B_k = \argmin_{B y_k = s_k}  \|B \|  = \frac{s_k y_k^\tp}{y_k^\tp y_k}
\]
regardless of whether $\lVert \, \cdot \, \rVert$ is the Frobenius or spectral norm. Hence we obtain a multivariate analogue of Steffensen method \eqref{uniSteff} as
\begin{equation}\label{Steffsy}
x_{k+1} = x_k - B_k \grad f(x_k) 
= x_k - \frac{[\grad f(x_k + \grad f(x_k)) - \grad f(x_k)]^\tp  \grad f(x_k)}{\lVert \grad f(x_k + \grad f(x_k)) - \grad f(x_k) \rVert^2} \grad f(x_k).
\end{equation}
We will call this \emph{quasi-Steffensen method} in analogy with quasi-Newton methods.

The key idea behind the Barzilai--Borwein method \cite{Barzilai1988two} is an alternative way of treating  the secant equation \eqref{secant equation}, whereby the approximate Hessian $B_k$ is assumed to take the form $B_k = \sigma_k I$ for some scalar $\sigma_k > 0$. Since in general it is not possible to find $\sigma_k$ so that \eqref{secant equation}  holds exactly with $B_k = \sigma_k I$, a best approximation is used instead. We seek $\sigma_k$ so that the residual of the secant equation $\|y_k - (1/\sigma_k) s_k\|^2$ or $\|\sigma_k y_k - s_k\|^2$ is minimized. The first minimization problem gives us
\begin{equation}\label{eq:bbmulti}
\sigma_k = \argmin_{\sigma>0} \; \|y_k - (1/\sigma) s_k\|^2 = \frac{s_k^\tp s_k}{s_k^\tp y_k} =\frac{\|\grad f(x_k)\|^2}{[\grad f(x_k + \grad f(x_k)) - \grad f(x_k)]^\tp \grad f(x_k)},
\end{equation}
and the second minimization gives the same expression as (\ref{Steffsy}). We will call the resulting iteration
\[
x_{k+1} = x_k - \frac{\|\grad f(x_k)\|^2}{[\grad f(x_k + \grad f(x_k)) - \grad f(x_k)]^\tp \grad f(x_k)} \grad f(x_k)
\]
\emph{Steffensen method} since it most resembles the univariate Steffensen method in  \eqref{uniSteff}. Note that the Barzilai--Borwein step size derived in \cite{Barzilai1988two} is
\begin{equation}\label{eq:bbori}
\beta_k = \frac{\|x_k - x_{k-1}\|^2}{[\grad f(x_k) - \grad f(x_{k-1})]^\tp(x_k - x_{k-1})}
\end{equation}
and differs significantly from \eqref{eq:bbmulti}. In particular, $x_{k+1} = x_k - \sigma_k \grad f(x_k)$  is a multistep method whereas $x_{k+1} = x_k - \beta_k \grad f(x_k)$ remains a single step method.

Both \eqref{Steffsy} and \eqref{eq:bbmulti} reduce to \eqref{uniSteff} when $f$ is univariate. Motivated by the univariate discussion before Theorem~\ref{superQ}, we combine features from \eqref{eq:bbmulti} and \eqref{eq:bbori} to obtain a \emph{Steffensen--Barzilai--Borwein method} in analogy with the univariate case \eqref{eq:bbs}:
\begin{equation}\label{eq:SBB}
x_{k+1} = x_k - \frac{\beta_k \|\grad f(x_k)\|^2}{[\grad f(x_k + \beta_k \grad f(x_k)) - \grad f(x_k)]^\tp \grad f(x_k)} \grad f(x_k).
\end{equation}
Note that \eqref{eq:SBB} reduces to \eqref{eq:bbs} when $f$ is univariate.
The stochastic version of \eqref{eq:SBB} will be our method of choice, supported by extensive empirical evidence some of which we will present in Section~\ref{sec:num}. 

In summary, we have four plausible learning rates.\label{pg:steps}
\begin{align*}
\eta^{\qS}_k &= \frac{[\grad f(x_k + \grad f(x_k)) - \grad f(x_k)]^\tp  \grad f(x_k)}{\lVert \grad f(x_k + \grad f(x_k)) - \grad f(x_k) \rVert^2}, \tag*{quasi-Steffensen:} \\
\eta^{\qSBB}_k &=  \frac{[\grad f(x_k + \beta_k \grad f(x_k)) - \grad f(x_k)]^\tp  \grad f(x_k)}{\lVert \grad f(x_k + \beta_k \grad f(x_k)) - \grad f(x_k) \rVert^2}, \tag*{quasi-Steffensen--Barzilai--Borwein:} \\
\eta^{\S}_k &= \frac{\|\grad f(x_k)\|^2}{[\grad f(x_k + \grad f(x_k)) - \grad f(x_k)]^\tp \grad f(x_k)},  \tag*{Steffensen:} \\
\eta^{\SBB}_k &= \frac{\beta_k \|\grad f(x_k)\|^2}{[\grad f(x_k + \beta_k \grad f(x_k)) - \grad f(x_k)]^\tp \grad f(x_k)}. \tag*{Steffensen--Barzilai--Borwein:}
\end{align*}
Here $\beta_k$ is the Barzilai--Borwein step size in \eqref{eq:bbori}. For a univariate function, the iterations with $\eta^{\qS}_k$ and $\eta^{\S}_k$ reduce to \eqref{uniSteff} whereas those with $\eta^{\qSBB}_k$ and $\eta^{\SBB}_k$ reduce to \eqref{eq:bbs}. The computational costs of all four learning rates are the same: two gradient evaluations and two inner products.

Note that our muiltivariate Steffensen and quasi-Steffensen methods are one-step methods --- $\eta^{\S}_k$ and $\eta^{\qS}_k$ depend only on $x_k$ --- just like the univariate Steffensen method. Steffensen--Barzilai--Borwein and quasi-Steffensen--Barzilai--Borwein are inevitably two-step methods because they involve the Barzilai--Borwein step size $\beta_k$, which has a two-step formula.

The main difference between our multivariate Steffensen methods and those in the literature  \cite{Amat2016steffensen,
Ezquerro2014hybrid,Henrici,
Huang1970unified,
Johnson1968steffensen,
Nedzhibov2011approach,
Nievergelt1991Aitken,
Nievergelt1995condition,
Noda1981Aitken1,
Noda1986Aitken2,
Noda1986Aitken3,
Noda1990Aitken4,
Noda1992Aitken5} is that ours are encapsulated as learning rates and avoid expensive multivariate divided differences. Recall that for $g = (g_1,\dots,g_n) : \mathbb{R}^n \to \mathbb{R}^n$, its divided difference \cite{Potra} at $x,y\in \mathbb{R}^n$ is the matrix $\llbracket x,y \rrbracket \in \mathbb{R}^{n \times n}$ whose $(i,j)$th entry is
\[
\llbracket x,y \rrbracket_{ij} \coloneqq
\begin{cases}
\dfrac{g_i(x_1,\dots,x_j,y_{j+1},\dots,y_n)-g_i(x_1,\dots,x_{j-1},y_{j},\dots,y_n)}{x_j-y_j} &x_j \neq y_j,\\
\dfrac{\partial g_i}{\partial x_j} (x_1,\dots,x_j,y_{j+1},\dots,y_n) &x_j = y_j,
\end{cases}
\]
for $i,j =1,\dots,n$.

In a stochastic setting, the learning rates $\eta^{\S}_k,\eta^{\qS}_k,\eta^{\SBB}_k,\eta^{\qSBB}_k$ share the same upper and lower bounds in Lemma~\ref{etabound:SSM} and as a result, the linear convergence conclusion in Theorem~\ref{linconv:SSBB} applies alike to all four of them. Our experiments also indicate that $\eta^{\qS}_k$ and $\eta^{\S}_k $ have similar performance and likewise for $\eta^{\qSBB}_k$ and $\eta^{\SBB}_k $, although there is a slight difference between $\eta^{\S}_k $ and $\eta^{\SBB}_k$.   One conceivable advantage of the `quasi' variants is that for a given $\grad f(x_k)$, the denominator vanishes only at a single point, e.g., when $\grad f(x_k +\grad f(x_k)) = \grad f(x_k)$, as opposed to a whole hyperplane, e.g., whenever $\grad f(x_k +  \grad f(x_k)) - \grad f(x_k) \perp \grad f(x_k)$. Nevertheless, in all our experiments on their stochastic variants, this has never been an issue.

We prefer the slightly simpler expressions of the Steffensen and Steffensen--Barzilai--Borwein methods and will focus our subsequent discussions on them. Their `quasi' variants may be taken as nearly equivalent alternatives for users who may have some other reasons to favor them.

\subsection{Stochastic multivariate setting}\label{sec:stoc}

Encapsulating Steffensen method in the form of learning rates offers an additional advantage --- it is straightforward to incorporate them into many stochastic optimization algorithms, which we will do next.

Standard gradient descent applied to \eqref{large sum}  requires the evaluation of $n$ gradients. The  stochastic gradient descent (SGD), instead of using the full gradient $\grad f(x_k)$, relies on an unbiased estimator $g_k$ with $\mathbb{E}[g_k] = \grad f(x_k)$ \cite{Robbins1951sto}.
One common randomization is to draw $i_k \in \{1, \dots, n\}$ randomly and set $g_k = \grad f_{i_{k}}(x_k)$, giving the update:
\[
x_{k+1} = x_k-\eta_k \grad f_{i_{k}}(x_k).
\]
Note that $\mathbb{E}[\grad f_{i_k}(x_k) \mid x_k] = \grad f(x_k)$ and its obvious advantage is that each step relies only  on a single gradient $\grad f_{i_k}$, resulting in a computational cost that is $1/n$ that of the standard gradient descent. While we could adopt this procedure to randomize our Steffensen and Steffensen--Barzilai--Borwein iterations, we will use a slightly more sophisticated variant with \emph{variance reduction} and \emph{minibatching}.

The price of randomization is paid in the form of variance, as the stochastic gradient $\grad f_{i_k}(x_k)$ equals the gradient $\grad f(x_k)$ only in expectation but each $\grad f_{i_k}(x_k)$ is different. Of the many variance reduction strategies, one of the best known and simplest is the stochastic variance reduced gradient method (SVRG)  \cite{johnson2013accelerating}, based on the tried-and-tested notion of control variates in Monte Carlo methods. We will emulate SVRG to randomize \eqref{Steffsy} and \eqref{eq:SBB}.  

The basic idea of SVRG is to compute the full gradient once every $m$ iterations for some fixed $m$ and use it to generate stochastic gradients with lower variance in the next $m$ iterations:
\[
x_{k+1} = x_k - \eta_k\bigl(\grad f_{i_k}(x_k) - \grad f_{i_k}(\widetilde{x}) + \grad f(\widetilde{x})\bigr).
\]
Here $\widetilde{x}$ denotes the point where full gradient is computed. Notice that when $k\to \infty$, $x_k$ and $\widetilde{x}$ are very close to the optimal point $x^*$. As $x_k$ and $\widetilde{x}$ are highly correlated, the variability of the stochastic gradient is reduced as a result \cite{johnson2013accelerating}.

We may similarly randomize multivariate Steffensen method. Our stochastic Steffensen method (SSM) in Algorithm~\ref{alg:SSM} operates in two nested loops. In the $k$th iteration of the outer loop, we compute two full gradients $\grad f(x_k)$ and $\grad f(x_k + \grad f(x_k))$. Note that $x_k$ plays the role of $\widetilde{x}$ in the above paragraph. These two terms are used for computing the Steffensen learning rate:
\begin{equation}\label{eq:etam}
\eta^{\SS}_{k} = \frac{1}{\sqrt{m}} \cdot\frac{\|\grad f(x_k)\|^2}{[\grad f(x_k + \grad f(x_k)) - \grad f(x_k)]^\tp \grad f(x_k)}.
\end{equation}
In the $(t+1)$th iteration of the inner loop, we use $\grad f(x_k)$ to generate the stochastic gradient with lower variance
\[
        v_{k,t} = \grad f_{i_t}(x_{k,t}) - \grad f_{i_t}(x_k) + \grad f(x_k),
\]
with $i_t \in \{1,\dots,n\}$ sampled uniformly. The updating rule takes the form
\[
x_{k,t+1} = x_{k,t} - \eta^{\SS}_{k} v_{k,t}
\]
where the search direction is known as the \emph{variance-reduced stochastic gradient}. Note that the learning rate $\eta_k$ given by \eqref{eq:etam} has an extra $1/\sqrt{m}$ factor; we will see how $m$ should be chosen in Section~\ref{sec:conv}.

\begin{algorithm}[htbp]
   \caption{Stochastic Steffensen Method (SSM)}
   \label{alg:SSM}
\begin{algorithmic}[1]
   \STATE {\bfseries Input:} initial state $x_0$, inner loop size $m$, data size $n$.
   \FOR{$k=0,1,\dots$}
   \STATE Compute full gradients $\grad f(x_k)$ and $\grad f(x_k + \grad f(x_k))$.
   \STATE Compute stochastic Steffensen learning rate
     \[
     \eta^{\SS}_{k} = \frac{1}{\sqrt{m}} \cdot\frac{\|\grad f(x_k)\|^2}{[\grad f(x_k + \grad f(x_k)) - \grad f(x_k)]^\tp \grad f(x_k)}.
     \]
    \STATE Set $x_{k,0} = x_k$.
    \FOR{$t=0$ {\bfseries to} $m-1$}
     \STATE Sample  $i_t \in \{1,\dots,n\}$ uniformly.
     \STATE Compute variance-reduced stochastic gradient
\[
             v_{k,t} = \grad f_{i_t}(x_{k,t}) - \grad f_{i_t}(x_k) + \grad f(x_k).
\]
     \STATE Update $x_{k,t+1} = x_{k,t} - \eta^{\SS}_{k}v_{k,t}$.
    \ENDFOR
  \STATE Set $x_{k+1} = x_{k,i}$ for uniformly chosen $i \in \{0, \dots, m - 1\}$.
   \ENDFOR
\end{algorithmic}
\end{algorithm}

Aside from variance reduction, we include another common enhancement called \emph{minibatching}. Minibatched SGD is a trade-off between SGD and gradient descent (GD) where the cost function (and therefore its gradient) is averaged over a small number of samples. SGD has a batch size of one whereas GD has a batch size that includes all training samples. In each iteration, we sample a minibatch $S_{k}\subseteq\{1,\dots,n\}$ with $|S_{k}|=b$ a small number and update
\[
x_{k+1} = x_k - \eta_k\frac{1}{|S_{k}|}\sum_{j\in S_{k}}\grad f_{j}(x_k) \eqqcolon x_k - \eta_k \grad f_{S_k}(x_k).
\]
Minibatched SGD smooths out some of the noise in SGD but maintains the ability to escape local minima.  The minibatch size $b$ is kept small, thus preserving the cost-saving benefits of SGD.  We want a small $b$ to minimize gradient computations and a large $m$ so that full gradients are computed only after a large number of iterations. With these considerations, we replace the factor of $1/\sqrt{m}$ in \eqref{eq:etam} by $b / m$. In Section~\ref{sec:conv}, we will see that this choice also allows us to establish linear convergence.

Upon incorporating (i) a Barzilai--Borwein step size, (ii) variance reduction, and (iii) minibatching, we arrive at the stochastic Steffensen--Barzilai--Borwein method (SSBB) in Algorithm~\ref{alg:SSBB}. This is our method of choice in this article.
\begin{algorithm}[htbp]
   \caption{Stochastic Steffensen--Barzilai--Borwein Method (SSBB)}
   \label{alg:SSBB}
\begin{algorithmic}[1]
    \STATE {\bfseries Input:} initial state $x_0$, inner loop size $m$, minibatch size $b$, data size $n$, Barzilai-Borwein learning rate $\beta_0 = -1$.
    \FOR{$k=0,1,\dots$}
        \STATE Compute  full gradient $\grad f(x_k)$.
        \IF{$k > 0$}
            \STATE Set $s_{k} = x_{k} - x_{k-1}$ and $y_{k} = \grad f(x_k) - \grad f(x_{k-1})$.
            \STATE Compute Barzilai--Borwein learning rate
	        \[
	        \beta_{k} = -\frac{\|s_{k}\|^2}{s_{k}^\tp y_{k}}.
            \]
        \ENDIF
        \STATE Compute the stochastic Steffensen--Barzilai--Borwein learning rate
        \[
        \eta^{\SSBB}_{k} = \frac{b}{m}\cdot\frac{\beta_k\|\grad f(x_k)\|^2}{[\grad f(x_k + \beta_k \grad f(x_k)) - \grad f(x_k)]^\tp \grad f(x_k)}.
        \]
        \STATE Set $x_{k,0} = x_k$.
        \FOR{$t=0$ {\bfseries to} $m-1$}
            \STATE Sample minibatch $S_{k,t}\subseteq\{1,\dots,n\}$ uniformly with $|S_{k,t}| = b$.
            \STATE Compute variance-reduced stochastic gradient
\[
                    v_{k,t} = \grad f_{S_{k,t}}(x_{k,t}) - \grad f_{S_{k,t}}(x_{k,t}) + \grad f(x_{k,t}).
\]
            \STATE Update $x_{k,t+1} = x_{k,t} - \eta^{\SSBB}_{k}v_{k,t}$.
        \ENDFOR
        \STATE Set $x_{k+1} = x_{k,i}$ for uniformly chosen $i \in \{0, \dots, m - 1\}$.
    \ENDFOR
\end{algorithmic}
\end{algorithm}

Although we did not include minibatching in  Algorithm~\ref{alg:SSM}'s pseudocode to avoid clutter, we will henceforth assume that it is also minibatched.
The randomization, variance reduction, and minibatching all apply verbatim when the learning rates in  Algorithms~\ref{alg:SSM} and \ref{alg:SSBB} are replaced respectively by the quasi-Steffensen and  quasi-Steffensen--Barzilai--Borwein learning rates on p.~\pageref{pg:steps}. Nevertheless, as we have mentioned, our numerical experiments do not show that the resulting algorithms differ in performance from that of Algorithms~\ref{alg:SSM} and \ref{alg:SSBB}.

\subsection{Randomized Kaczmarz method as a special case}\label{sec:kacz}

Given $A\in \mathbb{R}^{m \times n}$ of full row rank with \emph{row} vectors $a_1,\dots,a_m \in \mathbb{R}^n$ and $b\in \mathbb{R}^m$ in the image of $A$, the Kaczmarz method \cite{Kaczmarz,KaczEng}  solves the consistent linear system $Ax=b$ via
\[
x_{k+1} = x_k+ \frac{b_i- a_i^\tp x_k}{\lVert a_{i} \rVert^2} a_i,
\] 
with $i = k \mod m$, $i =1,\dots,  m$. The iterative method has remained relatively obscure, almost unheard of in numerical linear algebra,  until it was randomized in  \cite{strohmer2009randomized}, which essentially does
\[
x_{k+1} = x_k+ \frac{b_{i_k} - a_{i_k}^\tp x_k}{\lVert a_{i_k} \rVert^2} a_{i_k},
\]
where $i_k \in \{1,\dots,m\}$ is now \emph{sampled} with probability $\lVert a_{i_k}\rVert^2/\lVert A \rVert^2$.

We will see that randomized Kaczmarz method is equivalent to applying stochastic Steffensen method, with or without Barzilai--Borwein step size, to minimize the quadratic function $f : \mathbb{R}^n \to \mathbb{R}$,
\[
f(x) \coloneqq  \frac{1}{2}\sum_{i = 1}^m f_i(x) = \frac{1}{2}\sum_{i = 1}^m (a_i^\tp x - b_i)^2.
\]
While it is sometimes claimed that SGD has this property, this is not quite true. Suppose $i_k \in \{1,\dots,m\}$ is the random row index sampled at the $k$th step, the update rule in SGD gives
\[
x_{k+1} = x_k - \eta_k  (a_{i_k}^\tp x_k - b_i) a_{i_k},
\]
and the update rule in SLBFGS is even further from this.
So one needs to impose further assumptions \cite{Needell} on the learning rate to get randomized Kaczmarz method, which requires that $\eta_k = 1/\|a_{i_k}^2\|$. If we use the Steffensen method, we get from \eqref{eq:SBB} that
\[
\eta^{\S}_k = \frac{\|\grad f_{i_k}(x_k)\|^2}{[\grad f_{i_k}\bigl(x_k + \grad f_{i_k}(x_k)\bigr) - \grad f_{i_k}(x_k)]^\tp \grad f_{i_k}(x_k)} = \frac{1}{\|a_{i_k}\|^2};
\]
and using Steffensen--Barzilai--Borwein method makes no difference:
\[
\eta^{\SBB}_k = \frac{\beta_k\|\grad f_{i_k}(x_k)\|^2}{[\grad f_{i_k}\bigl(x_k + \beta_k\grad f_{i_k}(x_k)\bigr) - \grad f_{i_k}(x_k)]^\tp \grad f_{i_k}(x_k)} = \frac{1}{\|a_{i_k}\|^2},
\]
as $\beta_k = \|x_k - x_{k-1}\|^2/(x_k - x_{k-1})^\tp[\grad f_{i_k}(x_k) - \grad f_{i_k}(x_{k-1})] = 1/\|a_{i_k}\|^2$.

\section{Convergence Analysis}\label{sec:conv}

In this section, we establish the linear convergence of our stochastic Steffensen methods Algorithm~\ref{alg:SSM} (SSM) and Algorithm~\ref{alg:SSBB} (SSBB) for solving \eqref{large sum} under standard assumptions. 
We would like to stress that these convergence results are intended to provide a minimal  theoretical guarantee and do not really do justice to the actual performance of SSBB. The experiments in Section~\ref{sec:num} indicate that the convergence of SSBB is often superior to other existing methods like SGD and SVRG, with or without Barzilai--Borwein step size, or even SLBFGS.
However, we are unable to prove this theoretically, only that it is linearly convergent like the other methods.

For easy reference, we reproduce the minibatched SVRG algorithm in \cite[Algorithm~1]{babanezhad2015stopwasting} as Algorithm~\ref{alg:minibatchSVRG}.
\begin{algorithm}[htb]
   \caption{Minibatched SVRG}
   \label{alg:minibatchSVRG}
\begin{algorithmic}[1]
   \STATE {\bfseries Input:} initial state $x_0$, inner loop size $m$, minibatch size $b$, data size $n$.
   \FOR{$k=0,1,\dots$}
    \STATE Compute  full gradient $\grad f(x_k)$.
    \STATE Set $x_{k,0} = x_k$.
    \FOR{$t=0$ {\bfseries to} $m-1$}
     \STATE Sample minibatch $S_{k,t}\subseteq\{1,\dots,n\}$ uniformly with $|S_{k,t}| = b$.
     \STATE Compute variance-reduced stochastic gradient
     \[
     v_{k,t} = \grad f_{S_{k,t}}(x_{k,t}) - \grad f_{S_{k,t}}(x_k) + \grad f(x_k).
     \]
     \STATE Update $x_{k,t+1} = x_{k,t} - \eta_k v_{k,t}$.
    \ENDFOR
  \STATE Set $x_{k+1} = x_{k,i}$ for uniformly chosen $i \in \{0, \dots, m - 1\}$.
   \ENDFOR
\end{algorithmic}
\end{algorithm}
We need to establish the linear convergence of Algorithm~\ref{alg:minibatchSVRG} for our own convergence results in Sections~\ref{convSSM} and \ref{convSSBB} but we are unable to find such a result in the literature. In particular, the convergence results in \cite[Propositions~2--4]{babanezhad2015stopwasting} and \cite[Theorem~1]{xiao2014proximal} are for more sophisticated variants of Algorithm~\ref{alg:minibatchSVRG}. So we will provide a version following the same line of arguments in \cite[Theorem~1]{xiao2014proximal} but tailored to our own requirements.

Our linear convergence proofs for SSM and SSBB are a combination of the proofs in \cite{nitanda2016accelerated,xiao2014proximal} adapted for our purpose. In particular, we  quote \cite[Lemma~A]{nitanda2016accelerated} and prove a simplied version of \cite[Lemma~3]{xiao2014proximal} for easy reference.
\begin{lemma}[Nitanda]\label{Combination}
Let $\xi_1,\dots,\xi_n \in \mathbb{R}^{d}$ and $\bar{\xi} \coloneqq \sum_{i=1}^n\xi_i$. Let $S$ be a $b$-element subset chosen uniform randomly from all $b$-element subsets of $\{1,2, \ldots, n\}$. Then
\[
\mathbb{E}_{S}\Bigl\|\frac{1}{b} \sum\nolimits_{i \in S} \xi_{i}-\bar{\xi}\Bigr\|^2=\frac{n-b}{b(n-1)} \mathbb{E}_{i}\bigl\|\xi_{i}-\bar{\xi}\bigr\|^2.
\]
\end{lemma}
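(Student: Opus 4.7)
The plan is to exploit translation invariance to reduce to the centered case and then expand the squared norm using indicator variables, evaluating the relevant first and second moments of a uniformly chosen $b$-subset (equivalently, the second moments of a hypergeometric sampling scheme).

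First I would reduce to the case $\bar{\xi}=0$ (reading $\bar\xi$ as the mean $\frac{1}{n}\sum_i \xi_i$). Both sides of the identity are invariant under the shift $\xi_i \mapsto \xi_i - c$ for any $c \in \mathbb{R}^d$: on the right, $\xi_i - \bar\xi$ is manifestly translation invariant; on the left, $\frac{1}{b}\sum_{i\in S}\xi_i$ and $\bar\xi$ each change by the same amount $c$, so their difference is unchanged. Taking $c=\bar\xi$ reduces the claim to
\[
\mathbb{E}_{S}\Bigl\|\tfrac{1}{b}\sum_{i\in S}\xi_i\Bigr\|^2 \;=\; \frac{n-b}{bn(n-1)}\sum_{i=1}^n \|\xi_i\|^2 \qquad\text{subject to}\qquad \sum_{i=1}^n\xi_i=0,
\]
after using $\mathbb{E}_i\|\xi_i\|^2 = \frac{1}{n}\sum_i\|\xi_i\|^2$.

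Next I would expand the squared norm via indicators $X_i \coloneqq \mathbb{1}_{\{i\in S\}}$. Writing $\frac{1}{b}\sum_{i\in S}\xi_i = \frac{1}{b}\sum_{i=1}^n X_i \xi_i$ and using $X_i^2=X_i$,
\[
\Bigl\|\tfrac{1}{b}\sum_{i\in S}\xi_i\Bigr\|^2 = \frac{1}{b^2}\Bigl[\,\sum_{i=1}^n X_i \|\xi_i\|^2 + \sum_{i\neq j} X_i X_j\,\xi_i^{\tp}\xi_j\Bigr].
\]
For the uniform distribution on $b$-element subsets, symmetry and direct counting give $\mathbb{E}[X_i]=b/n$ and $\mathbb{E}[X_iX_j] = \binom{n-2}{b-2}/\binom{n}{b} = b(b-1)/(n(n-1))$ for $i\neq j$. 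Plugging these in, the cross terms collapse via the centering identity
\[
\sum_{i\neq j}\xi_i^{\tp}\xi_j \;=\; \Bigl\|\sum_{i=1}^n\xi_i\Bigr\|^2 - \sum_{i=1}^n\|\xi_i\|^2 \;=\; -\sum_{i=1}^n\|\xi_i\|^2,
\]
leaving a single multiple of $\sum_i\|\xi_i\|^2$. Combining the fractions $1/(bn)$ and $(b-1)/(bn(n-1))$ over the common denominator $bn(n-1)$ produces the numerator $(n-1)-(b-1)=n-b$, which is exactly the advertised factor.

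There is no real obstacle here; the argument is counting plus elementary algebra, and the only mildly delicate step is the sign/constant bookkeeping when the two $\sum_i\|\xi_i\|^2$ contributions are merged. To guard against errors I would sanity-check the resulting formula against the two extreme cases: $b=n$ (full sampling must give zero variance, and indeed $n-b=0$) and $b=1$ (the left-hand side should equal $\mathbb{E}_i\|\xi_i-\bar\xi\|^2$, matching $(n-1)/(1\cdot(n-1))=1$). Both checks come out correctly from the prefactor $(n-b)/(b(n-1))$.
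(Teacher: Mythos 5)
Your proof is correct, and the bookkeeping checks out: with $\mathbb{E}[X_i]=b/n$ and $\mathbb{E}[X_iX_j]=b(b-1)/(n(n-1))$, the coefficient of $\sum_i\|\xi_i\|^2$ collapses to $\frac{1}{bn}\bigl(1-\frac{b-1}{n-1}\bigr)=\frac{n-b}{bn(n-1)}$, which is exactly the right-hand side. Note, however, that the paper itself gives no proof of this statement --- it is quoted verbatim as Lemma~A of Nitanda's paper \cite{nitanda2016accelerated} --- so there is nothing internal to compare against; your argument is the standard hypergeometric second-moment computation and is essentially the canonical one. You were also right to read $\bar\xi$ as the mean $\frac{1}{n}\sum_i\xi_i$ rather than the sum as literally written: as stated the identity fails (e.g.\ at $b=n$ the left side would be $\|(1/n-1)\sum_i\xi_i\|^2\neq 0$ in general while the right side vanishes), so the displayed definition of $\bar\xi$ is a typo in the paper, and it would be worth saying so explicitly rather than only parenthetically. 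Your two boundary checks ($b=n$ and $b=1$) are a good confirmation that the corrected reading is the intended one.
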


For the rest of this section, we will need to assume, as is customary in such proofs of linear convergence, that our objective $f$ is $\mu$-strongly convex and the gradient of each additive component $f_i$ is $L$-Lipschitz continuous.  It follows that $\grad f$ must also be $L$-Lipschitz continuous.
\begin{assumption}\label{ass}
Assume that the  function $f$ in \eqref{large sum} satisfies
\begin{gather*}
f(w) \ge f(v)+\grad f(v)^\tp (w-v)+\frac{\mu}{2}\|v-w\|^2,\\
\|\grad f_i(v)-\grad f_i(w)\|  \le L\|v-w\|
\end{gather*}
for any $v, w \in \mathbb{R}^{d}$, $i =1,\dots,n$.
\end{assumption}

Applying Lemma~\ref{Combination} with $\xi_i = v_i^{k,t}$ and \cite[Corollary~3]{xiao2014proximal}, we may bound the variance of a minibatched variance-reduced gradient as follows.
\begin{lemma}\label{stobound}
Let $f$ be as in Assumption~\ref{ass} with $x^*\coloneqq\argmin_{x} f(x)$. Let
\[
 v_i^{k,t} = \grad f_i(x_{k,t}) - \grad f_i(x_k) + \grad f(x_k), \qquad v_{k,t} = \frac{1}{b}\sum_{i\in S_{k,t}} v_{i}^{k,t}.
\]
Then
\[
\mathbb{E}\|v_{k, t}-\grad f(x_{k,t})\|^2 \le \frac{4 L}{b} \bigl[f(x_{k, t})-f(x^*)+f(x_k)-f(x^*)\bigr].
\]
\end{lemma}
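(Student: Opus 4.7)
The plan is to control the variance of the minibatched variance-reduced gradient by first reducing, via Nitanda's lemma, to the full-batch second moment of the vectors $v_i^{k,t}$, and then bounding that second moment using the $L$-smoothness of each $f_i$ together with the identity $\grad f(x^*) = 0$.

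First I would condition on $x_{k,t}$ and $x_k$ and apply Lemma~\ref{Combination} with $\xi_i = v_i^{k,t}$. The uniform average $\bar\xi = (1/n)\sum_{i=1}^n \xi_i$ simplifies to $\grad f(x_{k,t})$, since the constant term $\grad f(x_k)$ passes through the average and cancels against $-(1/n)\sum_i \grad f_i(x_k) = -\grad f(x_k)$. The lemma then yields
\[
\mathbb{E}\|v_{k,t} - \grad f(x_{k,t})\|^2 = \frac{n-b}{b(n-1)}\cdot\frac{1}{n}\sum_{i=1}^n \|v_i^{k,t} - \grad f(x_{k,t})\|^2 \le \frac{1}{b}\cdot\frac{1}{n}\sum_{i=1}^n \|v_i^{k,t} - \grad f(x_{k,t})\|^2,
\]
using $(n-b)/(n-1) \le 1$.

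Next, since $v_i^{k,t} - \grad f(x_{k,t})$ is exactly the deviation of $u_i \coloneqq \grad f_i(x_{k,t}) - \grad f_i(x_k)$ from its own uniform average $\bar u = \grad f(x_{k,t}) - \grad f(x_k)$, the elementary variance-vs-second-moment inequality $(1/n)\sum_i \|u_i - \bar u\|^2 \le (1/n)\sum_i \|u_i\|^2$ gives
\[
\frac{1}{n}\sum_{i=1}^n \|v_i^{k,t} - \grad f(x_{k,t})\|^2 \le \frac{1}{n}\sum_{i=1}^n \|\grad f_i(x_{k,t}) - \grad f_i(x_k)\|^2.
\]
I would then insert $\pm\grad f_i(x^*)$ inside the norm and use $\|a+b\|^2 \le 2\|a\|^2 + 2\|b\|^2$ to split the right-hand side into two symmetric contributions evaluated at $x_{k,t}$ and at $x_k$.

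Finally, I would invoke \cite[Corollary~3]{xiao2014proximal}, which uses the convexity and $L$-smoothness of each $f_i$ together with $\grad f(x^*) = 0$ to produce the estimate $\frac{1}{n}\sum_i \|\grad f_i(x) - \grad f_i(x^*)\|^2 \le 2L\,[f(x) - f(x^*)]$. Applying this at $x = x_{k,t}$ and at $x = x_k$ and collecting the factors of $1/b$, $2$, and $2L$ yields precisely $\frac{4L}{b}[f(x_{k,t}) - f(x^*) + f(x_k) - f(x^*)]$. No single step is individually difficult; the one subtlety worth flagging is that the variance-vs-second-moment bound must be applied at the right moment to discard the cross term $\grad f(x_{k,t}) - \grad f(x_k)$, which is what makes the final estimate symmetric in $x_{k,t}$ and $x_k$ as required by the statement.
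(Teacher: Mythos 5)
Your proposal is correct and follows essentially the same route the paper takes: the paper proves this lemma by exactly the combination you describe, namely Lemma~\ref{Combination} applied with $\xi_i = v_i^{k,t}$ (so that $\bar\xi = \grad f(x_{k,t})$) followed by the bound $\frac{1}{n}\sum_i\|\grad f_i(x)-\grad f_i(x^*)\|^2\le 2L[f(x)-f(x^*)]$ from \cite[Corollary~3]{xiao2014proximal}. The intermediate steps you supply --- discarding the factor $(n-b)/(n-1)\le 1$, dropping the mean via the variance-versus-second-moment inequality, and splitting with $\|a+b\|^2\le 2\|a\|^2+2\|b\|^2$ around $\grad f_i(x^*)$ --- are precisely the standard details the paper leaves implicit.
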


The next lemma, a simplified version of \cite[Lemma~3]{xiao2014proximal}, gives a lower bound of the optimal value $f(x^*)$ useful in our proof of linear convergence.
\begin{lemma}\label{restbound}
Let $\Delta_{k,t} \coloneqq v_{k,t} - \grad f(x_{k,t})$ and $\eta_k$ be a learning rate with  $0<\eta_k \le 1 / L$. Then with the same assumptions and notations in Lemma~\ref{stobound}, we have
\[
f(x^*)\ge f(x_{k,t+1})+v_{k,t}^{\tp}(x^*-x_{k,t})+\frac{\eta_k}{2}\|v_{k,t}\|^2+\frac{\mu}{2}\|x^*-x_{k,t}\|^2+\Delta_{k,t}^{\tp}(x_{k,t+1}-x^*).
\]
\end{lemma}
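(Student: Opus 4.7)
The plan is to combine two standard ingredients from convex analysis: the strong convexity inequality for $f$ and the descent lemma coming from $L$-smoothness, then re-express gradients in terms of $v_{k,t}$ using the definition $\Delta_{k,t} = v_{k,t} - \grad f(x_{k,t})$ and the update rule $x_{k,t+1} = x_{k,t} - \eta_k v_{k,t}$.

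First I would write down the $\mu$-strong convexity inequality at $x_{k,t}$ evaluated at $x^*$,
\[
f(x^*) \ge f(x_{k,t}) + \grad f(x_{k,t})^\tp(x^* - x_{k,t}) + \frac{\mu}{2}\|x^* - x_{k,t}\|^2,
\]
and independently the descent lemma (which follows from $L$-Lipschitz continuity of $\grad f$, itself a consequence of Assumption~\ref{ass}) applied to the update $x_{k,t+1} = x_{k,t} - \eta_k v_{k,t}$,
\[
f(x_{k,t+1}) \le f(x_{k,t}) - \eta_k \grad f(x_{k,t})^\tp v_{k,t} + \frac{L\eta_k^2}{2}\|v_{k,t}\|^2.
\]
Using the hypothesis $\eta_k \le 1/L$ we get $L\eta_k^2 \le \eta_k$, so the quadratic term is bounded by $(\eta_k/2)\|v_{k,t}\|^2$. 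Solving for $f(x_{k,t})$ and substituting into the strong convexity bound yields a lower bound on $f(x^*)$ in which the only remaining gradient is $\grad f(x_{k,t})$.

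Next I would eliminate $\grad f(x_{k,t})$ in favor of $v_{k,t}$ via the identity $\grad f(x_{k,t}) = v_{k,t} - \Delta_{k,t}$. Expanding the two gradient terms gives $\eta_k\grad f(x_{k,t})^\tp v_{k,t} = \eta_k\|v_{k,t}\|^2 - \eta_k\Delta_{k,t}^\tp v_{k,t}$ and $\grad f(x_{k,t})^\tp(x^*-x_{k,t}) = v_{k,t}^\tp(x^*-x_{k,t}) - \Delta_{k,t}^\tp(x^*-x_{k,t})$. After substitution, the $\|v_{k,t}\|^2$ coefficients combine as $\eta_k - \eta_k/2 = \eta_k/2$, which produces exactly the $(\eta_k/2)\|v_{k,t}\|^2$ term in the claim.

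Finally, the remaining $\Delta_{k,t}$ contributions are $-\eta_k \Delta_{k,t}^\tp v_{k,t} - \Delta_{k,t}^\tp(x^*-x_{k,t})$; factoring out $\Delta_{k,t}^\tp$ gives $\Delta_{k,t}^\tp(-\eta_k v_{k,t} + x_{k,t} - x^*)$, and the update rule $-\eta_k v_{k,t} = x_{k,t+1} - x_{k,t}$ collapses the parenthesis to $x_{k,t+1} - x^*$, matching the stated lemma exactly. No step is genuinely hard here; the only real care needed is the bookkeeping for how $\eta_k \le 1/L$ turns the $L\eta_k^2/2$ factor into $\eta_k/2$ so that the sign of the $\|v_{k,t}\|^2$ term flips favorably once combined with the strong convexity inequality.
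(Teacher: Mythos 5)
Your proposal is correct and follows essentially the same route as the paper: combine the strong convexity inequality at $x_{k,t}$ with the descent lemma for the update $x_{k,t+1}=x_{k,t}-\eta_k v_{k,t}$, then rewrite $\grad f(x_{k,t})=v_{k,t}-\Delta_{k,t}$ so that the update rule collapses the $\Delta_{k,t}$ terms into $\Delta_{k,t}^{\tp}(x_{k,t+1}-x^*)$. The only cosmetic difference is that you invoke $L\eta_k^2\le\eta_k$ on the quadratic term up front, whereas the paper carries the factor $\tfrac{\eta_k}{2}(2-L\eta_k)$ to the end before bounding it below by $\tfrac{\eta_k}{2}$.
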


\begin{proof}
By the strong convexity of $f$, we have
\[
f(x^*) \ge f(x_{k,t}) + \grad f(x_{k,t})^{\tp}(x^* - x_{k,t}) + \frac{\mu}{2}\|x^* - x_{k,t}\|^2.
\]
By the smoothness of $f$, we have
\[
f(x_{k,t}) \ge f(x_{k,t+1}) - \grad f(x_{k,t+1})^{\tp}(x_{k,t+1} - x_{k,t}) - \frac{L}{2}\|x_{k,t+1} - x_{k,t}\|^2.
\]
Summing the two inequalities, we get
\[
f(x^*)\ge f(x_{k,t+1}) + \grad f(x_{k,t})^{\tp}(x^* - x_{k,t+1}) + \frac{\mu}{2}\|x^* - x_{k,t}\|^2 - \frac{L\eta_k^2}{2}\|v_{k,t}\|^2.
\]
The second term on the right simplifies as
\begin{align*}
\grad f(x_{k,t})^{\tp}(x^* - x_{k,t+1}) &= \grad f(x_{k,t})^{\tp}(x^* - x_{k,t+1}) + (v_{k,t} - v_{k,t})^{\tp}(x^* - x_{k,t+1})\\
& = v_{k,t}^{\tp}(x^* - x_{k,t+1}) + (v_{k,t} - \grad f(x_{k,t}))^{\tp}(x_{k,t+1} - x^*)\\
& = v_{k,t}^{\tp}(x^* - x_{k,t+1}) + \eta_k\|v_{k,t}\|^2.
\end{align*}
If the learning rate satisfies $0 < \eta_k \le 1/L$, then
\begin{align*}
f(x^*)& \ge f(x_{k,t+1})+v_{k,t}^{\tp}(x^*-x_{k,t})+\frac{\eta_k}{2}(2-L\eta_k)\left\|v_{k,t}\right\|^2+\frac{\mu}{2}\left\|x^*-x_{k,t}\right\|^2+\Delta_{k,t}^{\tp}(x_{k,t+1}-x^*)\\
& \ge f(x_{k,t+1})+v_{k,t}^{\tp}(x^*-x_{k,t})+\frac{\eta_k}{2}\left\|v_{k,t}\right\|^2+\frac{\mu}{2}\left\|x^*-x_{k,t}\right\|^2+\Delta_{k,t}^{\tp}(x_{k,t+1}-x^*),
\end{align*}
as required.
\end{proof}

\begin{theorem}[Linear convergence of Algorithm~\ref{alg:minibatchSVRG}]\label{maintheorem}
Let $f$ be as in Assumption~\ref{ass} with $x^*\coloneqq \argmin_{x} f(x)$. For the $(k+1)$th iteration of outer loop in Algorithm~\ref{alg:minibatchSVRG},
\[
\mathbb{E}[f(x_{k+1}) - f(x^*)] \le \left[\frac{b}{m\mu\eta_k(b - 4L\eta_k)} + \frac{4(m+1)L\eta_k}{m(b-4L\eta_k)}\right] [f(x_k) - f(x^*)].
\]
If $m$, $\eta_k$, and $b$ are chosen so that
\[
\rho_k = \frac{b}{m\mu\eta_k(b - 4L\eta_k)} + \frac{4(m+1)L\eta_k}{m(b-4L\eta_k)} \le \rho < 1,\qquad \eta_k < \min\Bigl( \frac{b}{4L}, \frac{1}{L} \Bigr),
\]
then Algorithm~\ref{alg:minibatchSVRG} converges linearly in expectation with
\[
\mathbb{E}[f(x_k) - f(x^*)]\le \rho^k[f(x_0) - f(x^*)].
\]
\end{theorem}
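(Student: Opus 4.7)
The plan is to establish a single-iteration recursion that controls the pair $(\|x_{k,t+1}-x^*\|^2,\, f(x_{k,t+1})-f(x^*))$, telescope it across the inner loop $t=0,\dots,m-1$, and then average to bound $\mathbb{E}[f(x_{k+1})-f(x^*)]$ by a contraction factor times $f(x_k)-f(x^*)$. Linear convergence then follows by iterating the resulting inequality over $k$. The key inputs are Lemma~\ref{Combination} and Lemma~\ref{stobound} (variance bound on the minibatched variance-reduced gradient) and Lemma~\ref{restbound} (proximal-type inequality).

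First I would start from Lemma~\ref{restbound} and substitute the update $x_{k,t+1}=x_{k,t}-\eta_k v_{k,t}$ to rewrite $v_{k,t}^{\tp}(x^*-x_{k,t})$ and $\tfrac{\eta_k}{2}\|v_{k,t}\|^2$ entirely in terms of iterate differences. The standard three-point identity
\[
-(x_{k,t}-x_{k,t+1})^{\tp}(x^*-x_{k,t}) - \tfrac{1}{2}\|x_{k,t}-x_{k,t+1}\|^2 = \tfrac{1}{2}\|x_{k,t}-x^*\|^2 - \tfrac{1}{2}\|x_{k,t+1}-x^*\|^2
\]
collapses the mixed terms and, after multiplying through by $2\eta_k$, yields
\[
2\eta_k\bigl[f(x_{k,t+1})-f(x^*)\bigr] + \|x_{k,t+1}-x^*\|^2 \le (1-\mu\eta_k)\|x_{k,t}-x^*\|^2 - 2\eta_k\Delta_{k,t}^{\tp}(x_{k,t+1}-x^*).
\]
This uses the restriction $\eta_k\le 1/L$ from Lemma~\ref{restbound}.

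Next I would take the expectation conditional on the history up to step $(k,t)$. The noise term is handled by the decomposition $x_{k,t+1}-x^* = (x_{k,t}-x^*) - \eta_k(\grad f(x_{k,t})+\Delta_{k,t})$: the terms linear in $\Delta_{k,t}$ vanish because $\mathbb{E}[\Delta_{k,t}\mid x_{k,t}]=0$, leaving only $\mathbb{E}[-2\eta_k\Delta_{k,t}^{\tp}(x_{k,t+1}-x^*)] = 2\eta_k^2\mathbb{E}\|\Delta_{k,t}\|^2$. Plugging in Lemma~\ref{stobound} bounds this by $\tfrac{8L\eta_k^2}{b}[f(x_{k,t})-f(x^*)+f(x_k)-f(x^*)]$. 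Taking full expectation and summing the resulting inequality from $t=0$ to $t=m-1$ telescopes the $\mathbb{E}\|x_{k,t}-x^*\|^2$ terms, so only the boundary $D_0=\|x_k-x^*\|^2$ and the (nonnegative, discardable) terminal $D_m$ remain. Collecting the $f$-value sums on the two sides (and requiring $\eta_k<b/(4L)$ so that the coefficient $b-4L\eta_k$ on the left is positive) produces a bound of the form
\[
\frac{2\eta_k(b-4L\eta_k)}{b}\sum_{t=0}^{m-1}\mathbb{E}[f(x_{k,t})-f(x^*)] \le (1-\mu\eta_k)\|x_k-x^*\|^2 + \frac{8(m+1)L\eta_k^2}{b}[f(x_k)-f(x^*)].
\]

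Finally I would use strong convexity $\|x_k-x^*\|^2\le \tfrac{2}{\mu}[f(x_k)-f(x^*)]$ to convert the remaining distance-squared into a function-value gap, and invoke the uniform sampling rule $x_{k+1}\sim\operatorname{Unif}\{x_{k,0},\dots,x_{k,m-1}\}$ so that $\mathbb{E}[f(x_{k+1})-f(x^*)]=\tfrac{1}{m}\sum_{t=0}^{m-1}\mathbb{E}[f(x_{k,t})-f(x^*)]$. Dividing through and simplifying gives exactly the stated contraction factor $\rho_k$. Iterating this one-step contraction over $k$ and using $\rho_k\le\rho<1$ yields $\mathbb{E}[f(x_k)-f(x^*)]\le\rho^k[f(x_0)-f(x^*)]$. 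The main obstacle is the cross term $\Delta_{k,t}^{\tp}(x_{k,t+1}-x^*)$: because $x_{k,t+1}$ depends on $\Delta_{k,t}$ the expectation is not zero, and the decomposition trick above is essential to isolate the noise and convert the cross term into $2\eta_k^2\mathbb{E}\|\Delta_{k,t}\|^2$ that Lemma~\ref{stobound} can control; a secondary, more tedious obstacle is the bookkeeping during telescoping to ensure that both the $D$-column and the $F$-column collapse to the desired boundary form while enforcing $\eta_k<\min(b/(4L),1/L)$.
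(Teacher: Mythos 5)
Your proposal follows essentially the same route as the paper's proof: expand $\|x_{k,t+1}-x^*\|^2$, invoke Lemma~\ref{restbound}, isolate the noise in the cross term $\Delta_{k,t}^{\tp}(x_{k,t+1}-x^*)$ (your linear decomposition of $x_{k,t+1}-x^*$ is exactly the paper's splitting via $\bar{x}_{k,t+1}=x_{k,t}-\eta_k\grad f(x_{k,t})$, yielding the same $2\eta_k^2\mathbb{E}\|\Delta_{k,t}\|^2$ controlled by Lemma~\ref{stobound}), then telescope, apply strong convexity to the boundary distance term, and use the uniform-sampling definition of $x_{k+1}$. The only differences are cosmetic (you retain the $(1-\mu\eta_k)$ factor that the paper discards, and your index bookkeeping in the telescoped sum is slightly cleaner), so the argument is correct and matches the paper.
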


\begin{proof}
For the iteration in the inner loop, we apply Lemma~\ref{restbound} to get
\begin{equation}\label{distancebound}
\begin{aligned}
\left\|x_{k,t+1}-x^*\right\|^2 &= \left\|x_{k,t} - x^*\right\|^2 - 2\eta_kv_{k,t}^{\tp}(x_{k,t} - x^*) + \eta_k^2\left\|v_{k,t}\right\|^2 \\
& \le \left\|x_{k,t} - x^*\right\|^2 + 2\eta_k[f(x^*) - f(x_{k,t+1})] - 2\eta_k\Delta_{k,t}^{\tp}(x_{k,t+1} - x^*). 
\end{aligned}
\end{equation}
Lemma~\ref{restbound} requires that the learning rate $\eta_k\leq 1/L$. Let $\bar{x}_{k,t+1} \coloneqq x_{k,t}-\eta_k\nabla f(x_{k,t})$. Then the last term in \eqref{distancebound} may be written as
\begin{align*}
-2\eta_k\Delta_{k,t}^{\tp}(x_{k,t+1} - x^*) &= -2\eta_k\Delta_{k,t}^{\tp}(x_{k,t+1} - \bar{x}_{k,t+1}) - 2\eta_k\Delta_{k,t}^{\tp}(\bar{x}_{k,t+1} - x^*)\\
& = 2\eta_k^2\left\|\Delta_{k,t}\right\|^2 - 2\eta_k\Delta_{k,t}^{\tp}(\bar{x}_{k,t+1} - x^*).
\end{align*}
Plugging this into \eqref{distancebound} and taking expectations on both sides conditioned on $x_{k,t}$ and $x_k$ respectively, we get
\begin{align*}
\mathbb{E}\|x_{k,t+1}-x^*\|^2&\le \|x_{k,t} - x^*\|^2 + 2\eta_k[\eta_k\mathbb{E}\|\Delta_{k,t}\|^2 - \mathbb{E}[\Delta_{k,t}^{\tp}(\bar{x}_{k,t+1} - x^*)] - (f(x_{k,t+1}) - f(x^*))]\\
&=  \|x_{k,t} - x^*\|^2 + 2\eta_k[\eta_k\mathbb{E}\|\Delta_{k,t}\|^2 - (f(x_{k,t+1}) - f(x^*))],
\end{align*}
where the last equality follows from $\mathbb{E}[\Delta_{k,t}] = 0$. Set $\gamma \coloneqq 8L\eta_k^2/b$. By Lemma~\ref{stobound}, we have
\[
\mathbb{E}\|x_{k,t+1}-x^*\|^2
\le \|x_{k,t} - x^*\|^2 + \gamma[f(x_{k,t}) - f(x^*) + f(x_k) - f(x^*)] - 2\eta_k\mathbb{E}[f(x_{k,t+1}) - f(x^*)].
\]
For $t=0,\dots,m-1$, we have
\[
\mathbb{E}\left\|x_{k,t+1}-x^*\right\|^2 + 2\eta_k\mathbb{E}[f(x_{k,t+1}) - f(x^*)]
\le \|x_{k,t} - x^*\|^2 + \gamma[f(x_{k,t}) - f(x^*) + f(x_k) - f(x^*)].
\]
Summing this inequality over all $t=0,\dots,m-1$, the left hand side becomes
\[
\text{LHS} = \sum_{t=0}^{m-1}\mathbb{E}\left\|x_{k,t+1}-x^*\right\|^2 + 2\eta_k\sum_{t=0}^{m-1}\mathbb{E}[f(x_{k,t+1}) - f(x^*)],
\]
and the right hand side becomes
\[
\text{RHS} = \sum_{t=0}^{m-1}\|x_{k,t} - x^*\|^2 + \gamma\sum_{t=0}^{m-1}\mathbb{E}[f(x_{k,t}) - f(x^*)] + \gamma m\mathbb{E}[f(x_k) - f(x^*)].
\]
By the definition of $x_{k+1}$ in Algorithm~\ref{alg:minibatchSVRG},
\[
\mathbb{E}[f(x_{k+1})] = \frac{1}{m}\sum_{t=1}^mf(x_{k,t}),
\]
and so, bearing in mind that  $\text{LHS} \le \text{RHS}$,
\begin{align*}
\mathbb{E}\|x_{k,m}&-x^*\|^2 + 2\eta_km\mathbb{E}[f(x_{k+1}) - f(x^*)]\\
&\le  \mathbb{E}\left\|x_{k,0}-x^*\right\|^2 + \gamma m\mathbb{E}[f(x_k) - f(x^*)] + \gamma\sum_{t=0}^{m-1}\mathbb{E}[f(x_{k,t}) - f(x^*)]\\
&\le  \mathbb{E}\left\|x_{k,0}-x^*\right\|^2 + \gamma m\mathbb{E}[f(x_k) - f(x^*)] + \gamma\sum_{t=0}^{m}\mathbb{E}[f(x_{k,t}) - f(x^*)]\\
&=\mathbb{E}\left\|x_{k,0}-x^*\right\|^2 + \gamma m\mathbb{E}[f(x_k) - f(x^*)] + \gamma m\mathbb{E}[f(x_{k+1}) - f(x^*)] + \gamma[f(x_k) - f(x^*)].
\end{align*}
Hence we have
\begin{align*}
2\eta_km\mathbb{E}[f(x_{k+1}) - f(x^*)] \le\ & 2\eta_km\mathbb{E}[f(x_{k+1}) - f(x^*)] + \mathbb{E}\left\|x_{k,m}-x^*\right\|^2\\
\le\ & \mathbb{E}\left\|x_k-x^*\right\|^2 + \gamma(m+1)\mathbb{E}[f(x_k) - f(x^*)] + \gamma m\mathbb{E}[f(x_{k+1}) - f(x^*)].
\end{align*}
Rearranging terms and applying strong convexity of $f$, we have
\begin{align*}
\biggl(2\eta_k - \frac{8L\eta_k^2}{b} \biggr)m\mathbb{E}[f(x_{k+1}) - f(x^*)] & \le  \mathbb{E}\left\|x_k-x^*\right\|^2 + \frac{8(m+1)L\eta_k^2}{b}\mathbb{E}[f(x_k) - f(x^*)]\\
&\le  \frac{2}{\mu}[f(x_k) - f(x^*)] + \frac{8(m+1)L\eta_k^2}{b}\mathbb{E}[f(x_k) - f(x^*)].
\end{align*}
Here we require that $2\eta_k > 8L\eta_k^2 / b$ and thus $\eta_k < b / (4L)$, leading to
\[
\mathbb{E}[f(x_{k+1}) - f(x^*)] \le \rho_k [f(x_k) - f(x^*)]
\]
with
\[
\rho_k \coloneqq \frac{b}{m\mu\eta_k(b - 4L\eta_k)} + \frac{4(m+1)L\eta_k}{m(b-4L\eta_k)}.
\]
Choose $m, \eta_k$ so that $\rho_k \le \rho < 1$ and apply the last inequality recursively, we get
\[
\mathbb{E}[f(x_k) - f(x^*)]\le  \rho^k[f(x_0) - f(x^*)]
\]
as required.
\end{proof}

\subsection{Linear convergence of stochastic Steffensen method}\label{convSSM}

With Theorem~\ref{maintheorem}, we may deduce the linear convergence of  Algorithm~\ref{alg:SSM} as a special case of Algorithm~\ref{alg:minibatchSVRG} with $b = 1$ (no minibatching) and 
$\eta_k = \eta^{\SS}_k$ (SSM learning rate).
\begin{lemma}\label{etabound:SSM}
Let $f$ be as in Assumption~\ref{ass}. Then the stochastic Steffensen learning rate
\[
\eta^{\SS}_k = \frac{1}{\sqrt{m}}\cdot\frac{\|\grad f(x_k)\|^2}{{\grad f(x_k)}^\tp (\grad f(x_k + \grad f(x_k)) - \grad f(x_k))}
\]
satisfies
\[
\frac{1}{\sqrt{m}L} \leq \eta^{\SS}_k \le \frac{1}{\sqrt{m}\mu}.
\]
\end{lemma}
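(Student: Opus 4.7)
The plan is to recognize that the denominator of $\eta^{\SS}_k$ is exactly an inner product of the form $(\grad f(u)-\grad f(v))^\tp(u-v)$ with the special choice $u = x_k + \grad f(x_k)$ and $v = x_k$, so that $u - v = \grad f(x_k)$. With this identification, the two standard consequences of Assumption~\ref{ass} give both sides of the bound immediately.

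First I would note that although Assumption~\ref{ass} only posits $L$-Lipschitz continuity of each $\grad f_i$, this transfers to $\grad f$ itself because $\grad f$ is the average of the $\grad f_i$ (as the excerpt already observes). Then, using $\mu$-strong convexity of $f$, the monotonicity inequality
\[
\bigl(\grad f(u) - \grad f(v)\bigr)^\tp (u - v) \ge \mu \|u - v\|^2
\]
plugged in with $u-v = \grad f(x_k)$ yields
\[
\bigl[\grad f\bigl(x_k + \grad f(x_k)\bigr) - \grad f(x_k)\bigr]^\tp \grad f(x_k) \ge \mu \|\grad f(x_k)\|^2.
\]
Symmetrically, Cauchy--Schwarz combined with the $L$-Lipschitz bound on $\grad f$ gives
\[
\bigl[\grad f\bigl(x_k + \grad f(x_k)\bigr) - \grad f(x_k)\bigr]^\tp \grad f(x_k) \le \bigl\|\grad f\bigl(x_k + \grad f(x_k)\bigr) - \grad f(x_k)\bigr\| \,\|\grad f(x_k)\| \le L\|\grad f(x_k)\|^2.
\]

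Both sides are positive (by strong convexity, $\mu>0$), so I can take reciprocals and multiply through by $\|\grad f(x_k)\|^2$ to obtain
\[
\frac{1}{L} \le \frac{\|\grad f(x_k)\|^2}{[\grad f(x_k + \grad f(x_k)) - \grad f(x_k)]^\tp \grad f(x_k)} \le \frac{1}{\mu}.
\]
Dividing by $\sqrt{m}$ gives the advertised bounds on $\eta^{\SS}_k$. One minor edge case to acknowledge is when $\grad f(x_k) = 0$: then $x_k$ is already the minimizer (by strong convexity) and the algorithm terminates, so the bound is vacuous and no division-by-zero issue arises. There is no real obstacle here; the entire argument is a clean one-line application of the standard strong-convexity and Lipschitz-gradient inequalities to the special direction $u - v = \grad f(x_k)$.
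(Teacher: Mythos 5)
Your proof is correct and follows essentially the same route as the paper's: the lower bound on $\eta^{\SS}_k$ comes from bounding the denominator above via Cauchy--Schwarz and the $L$-Lipschitz continuity of $\grad f$, and the upper bound comes from bounding the denominator below via the strong monotonicity of $\grad f$ implied by $\mu$-strong convexity. The paper's proof is merely a terser statement of the identical argument, so your more explicit write-up (including the remark on the degenerate case $\grad f(x_k)=0$) is a faithful expansion of it.
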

\begin{proof}
Since $\grad f$ is $L$-Lipschitz, a lower bound is given by
\[
\eta^{\SS}_k \ge \frac{1}{\sqrt{m}}\cdot\frac{\|\grad f(x_k)\|^2}{L\|\grad f(x_k)\|^2} = \frac{1}{\sqrt{m}L}.
\]
The required upper bound follows the $\mu$-strong convexity of $f$.
\end{proof}

\begin{corollary}[Linear convergence of SSM]
\label{linconv:SSM}
Let $f$ be as in Assumption~\ref{ass} with $x^*\coloneqq \argmin_{x} f(x)$. If $m, \eta$ is chosen so that
\[
\rho = \frac{\kappa + 4(1+1/m)\kappa}{\sqrt{m} -4 \kappa} < 1,
\]
where $\kappa = L / \mu$ is the condition number, then Algorithm~\ref{alg:SSM} converges linearly in expectation with
\[
\mathbb{E}[f(x_k) - f(x^*)]\le \rho^k[f(x_0) - f(x^*)].
\]
\end{corollary}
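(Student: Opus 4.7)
The plan is to invoke Theorem~\ref{maintheorem} with $b=1$ and learning rate $\eta_k = \eta^{\SS}_k$, and use the two-sided bound from Lemma~\ref{etabound:SSM} to control the rate $\rho_k$ uniformly. Since $\eta^{\SS}_k$ is a random quantity (it depends on $x_k$), the key point is that Lemma~\ref{etabound:SSM} provides a \emph{deterministic} sandwich $1/(\sqrt{m}L) \le \eta^{\SS}_k \le 1/(\sqrt{m}\mu)$ that holds almost surely under Assumption~\ref{ass}, so the hypotheses of Theorem~\ref{maintheorem} can be verified pathwise.

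First I would verify the admissibility condition $\eta^{\SS}_k < \min(b/(4L), 1/L) = 1/(4L)$. Using $\eta^{\SS}_k \le 1/(\sqrt{m}\mu)$, this reduces to $\sqrt{m} > 4\kappa$, which is implied by the assumption $\rho < 1$ since the denominator $\sqrt{m}-4\kappa$ in $\rho$ must be positive. Next I would bound the two terms of
\[
\rho_k = \frac{1}{m\mu\eta^{\SS}_k(1-4L\eta^{\SS}_k)} + \frac{4(m+1)L\eta^{\SS}_k}{m(1-4L\eta^{\SS}_k)}
\]
separately. In each term the factor $(1-4L\eta^{\SS}_k)$ in the denominator requires an \emph{upper} bound on $\eta^{\SS}_k$: applying $L\eta^{\SS}_k \le \kappa/\sqrt{m}$ yields $1 - 4L\eta^{\SS}_k \ge (\sqrt{m}-4\kappa)/\sqrt{m}$.

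For the first term the factor $\mu\eta^{\SS}_k$ in the denominator must be bounded from below; this is where the \emph{lower} bound on $\eta^{\SS}_k$ enters: $\mu\eta^{\SS}_k \ge \mu/(\sqrt{m}L) = 1/(\sqrt{m}\kappa)$. Combining gives
\[
\frac{1}{m\mu\eta^{\SS}_k(1-4L\eta^{\SS}_k)} \le \frac{\kappa}{\sqrt{m}-4\kappa}.
\]
For the second term I would again use $L\eta^{\SS}_k \le \kappa/\sqrt{m}$ in the numerator, obtaining
\[
\frac{4(m+1)L\eta^{\SS}_k}{m(1-4L\eta^{\SS}_k)} \le \frac{4(1+1/m)\kappa}{\sqrt{m}-4\kappa}.
\]
Summing gives $\rho_k \le \rho$ almost surely, so Theorem~\ref{maintheorem} applies with the deterministic rate $\rho < 1$, yielding $\mathbb{E}[f(x_k) - f(x^*)] \le \rho^k [f(x_0) - f(x^*)]$ by induction over the outer loop.

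I do not anticipate serious difficulty; the only subtlety is the dual use of Lemma~\ref{etabound:SSM}'s bounds (the lower bound for the $\mu\eta^{\SS}_k$ factor, the upper bound for the $1-4L\eta^{\SS}_k$ factor), together with keeping track of the fact that $\eta^{\SS}_k$ is random but its bounds are deterministic so Theorem~\ref{maintheorem} can be applied conditionally in each outer iteration and the total recursion chained via the tower property of conditional expectation.
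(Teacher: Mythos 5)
Your proposal is correct and follows essentially the same route as the paper: apply Theorem~\ref{maintheorem} with $b=1$ and $\eta_k=\eta^{\SS}_k$, check $\eta^{\SS}_k<1/(4L)$ via the upper bound of Lemma~\ref{etabound:SSM} (equivalent to $\sqrt{m}>4\kappa$, i.e.\ $m>16\kappa^2$), and then bound $\rho_k$ by inserting the lower bound of $\eta^{\SS}_k$ into the $\mu\eta^{\SS}_k$ factor and the upper bound into the $1-4L\eta^{\SS}_k$ and numerator factors, exactly as the paper does. Your added remarks on the pathwise (almost sure) nature of the sandwich and the tower-property chaining are a slight refinement of a point the paper leaves implicit, but they do not change the argument.
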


\begin{proof}
By Theorem~\ref{maintheorem}, we have
\[
\mathbb{E}[f(x_{k+1}) - f(x^*)] \le \left[\frac{1}{m\mu\eta^{\SS}_k(1 - 4L\eta^{\SS}_k)} + \frac{4(m+1)L\eta^{\SS}_k}{m(1-4L\eta^{\SS}_k)}\right] \mathbb{E}[f(x_k) - f(x^*)]
\]
as long as $\eta^{\SS}_k < 1/(4L)$. Lemma~\ref{etabound:SSM} shows that this holds for $m > 16\kappa^2$. The upper and lower bounds in Lemma~\ref{etabound:SSM} also give
\begin{align*}
    \rho^{\SS}_k &= \frac{1}{m\mu\eta^{\SS}_k(1 - 4L\eta^{\SS}_k)} + \frac{4(m+1)L\eta^{\SS}_k}{m(1-4L\eta^{\SS}_k)}\\
    &\leq \frac{1}{m\mu\frac{1}{\sqrt{m}L}(1-4L\frac{1}{\sqrt{m}\mu})} + \frac{4(m + 1)L\frac{1}{\sqrt{m}\mu}}{m(1-4L\frac{1}{\sqrt{m}\mu})}\\
    &= \frac{\kappa + 4(1+1/m)\kappa}{\sqrt{m} -4 \kappa}.
\end{align*}
Hence if $m$ is chosen so that $\rho < 1$, we have
\[
\mathbb{E}[f(x_k) - f(x^*)]\le \rho^k[f(x_0) - f(x^*)]
\]
as required.
\end{proof}

\subsection{Linear convergence of stochastic Steffensen--Barzilai--Borwein}\label{convSSBB}

The linear convergence of  Algorithm~ \ref{alg:SSBB} likewise follows from Theorem~\ref{maintheorem} with $\eta_k = \eta^{\SSBB}_k$.
\begin{lemma}
\label{etabound:SSBB}
Let $f$ be as in Assumption~\ref{ass}. Then the stochastic Steffensen--Barzilai--Borwein learning rate
\[
\eta^{\SSBB}_k = \frac{b}{m}\cdot\frac{\beta_k\|\grad f(x_k)\|^2}{[\grad f(x_k + \beta_k\nabla f(x_k)) - \grad f(x_k)]^\tp \grad f(x_k)}
\]
satisfies
\[
\frac{b}{mL} \le \eta_k^{\SSBB} \le \frac{b}{m\mu}.
\]
\end{lemma}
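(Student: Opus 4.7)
The plan is to imitate the proof of the preceding Lemma~\ref{etabound:SSM} almost verbatim, with one small wrinkle to handle the sign of $\beta_k$. The key observation is that both the numerator and the denominator of $\eta^{\SSBB}_k$ contain a single factor of $\beta_k$ (implicitly, in the denominator, through the argument of $\nabla f$), so after multiplying top and bottom by $\beta_k$ we obtain a ratio in which every quantity can be controlled by strong convexity and Lipschitz smoothness applied to the secant pair $x_k$ and $x_k + \beta_k \nabla f(x_k)$.

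First I would note that under Assumption~\ref{ass}, the quantity $s_k^\tp y_k = (x_k-x_{k-1})^\tp(\nabla f(x_k)-\nabla f(x_{k-1}))$ satisfies $\mu\|s_k\|^2 \le s_k^\tp y_k \le L\|s_k\|^2$, so $\beta_k = -\|s_k\|^2/(s_k^\tp y_k)$ is a well-defined nonzero real number (negative, in fact); in any case what matters below is that $\beta_k^2 > 0$. Then I would rewrite
\[
\eta^{\SSBB}_k = \frac{b}{m}\cdot\frac{\beta_k^2\|\nabla f(x_k)\|^2}{\beta_k[\nabla f(x_k+\beta_k\nabla f(x_k))-\nabla f(x_k)]^\tp \nabla f(x_k)},
\]
and observe that the denominator is exactly $[\nabla f(y)-\nabla f(x)]^\tp(y-x)$ with $x=x_k$ and $y=x_k+\beta_k\nabla f(x_k)$, for which $y-x=\beta_k\nabla f(x_k)$ and hence $\|y-x\|^2=\beta_k^2\|\nabla f(x_k)\|^2$.

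Next I would apply the two standard consequences of Assumption~\ref{ass}: by $\mu$-strong convexity of $f$,
\[
[\nabla f(y)-\nabla f(x)]^\tp(y-x) \ge \mu\|y-x\|^2 = \mu\beta_k^2\|\nabla f(x_k)\|^2,
\]
and by $L$-Lipschitz continuity of $\nabla f$ (combined with Cauchy--Schwarz),
\[
[\nabla f(y)-\nabla f(x)]^\tp(y-x) \le L\|y-x\|^2 = L\beta_k^2\|\nabla f(x_k)\|^2.
\]
In particular the denominator is strictly positive, so $\eta^{\SSBB}_k$ is well-defined, and substituting these two-sided bounds into the displayed expression for $\eta^{\SSBB}_k$ cancels the common factor $\beta_k^2\|\nabla f(x_k)\|^2$ and yields
\[
\frac{b}{mL}\;\le\;\eta^{\SSBB}_k\;\le\;\frac{b}{m\mu},
\]
as required.

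There is essentially no obstacle of substance; the only place to be careful is the sign issue in the very first rewrite, since one might be tempted to bound numerator and denominator separately (which would force case analysis on $\mathrm{sgn}(\beta_k)$). By instead pairing them into the single nonnegative quantity $\beta_k^2\|\nabla f(x_k)\|^2$ up top and the nonnegative quantity $[\nabla f(y)-\nabla f(x)]^\tp(y-x)$ on the bottom, the sign of $\beta_k$ becomes irrelevant and the proof reduces to the same one-line strong-convexity/Lipschitz sandwich used in Lemma~\ref{etabound:SSM}.
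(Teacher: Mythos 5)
Your proof is correct and follows essentially the same route as the paper, which simply says the argument is ``similar to that of Lemma~\ref{etabound:SSM}'', i.e., the same strong-convexity/Lipschitz sandwich on the secant quantity $[\grad f(y)-\grad f(x)]^\tp(y-x)$. Your multiplication by $\beta_k$ to form $\beta_k^2\|\grad f(x_k)\|^2$ in the numerator is exactly the right way to dispose of the sign issue (since $\beta_k<0$ here, unlike the $\beta=1$ case of the preceding lemma), a detail the paper leaves implicit.
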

\begin{proof}
Similar to that of Lemma~\ref{etabound:SSM}.
\end{proof}

\begin{corollary}[Linear convergence of SSBB]\label{linconv:SSBB}
Let $f$ be as in Assumption~\ref{ass} with $x^*\coloneqq \argmin_{x} f(x)$. If $m$ and $b$ are chosen so that
\[
\rho = \frac{\kappa m + 4\kappa b(1 + 1/m)}{mb - 4\kappa b},\qquad m > \max(4\kappa, b\kappa),
\]
where $\kappa = L / \mu$ is the condition number, then Algorithm~\ref{alg:SSBB} converges linearly in expectation with
\[
\mathbb{E}[f(x_k) - f(x^*)]\le \rho^k[f(x_0) - f(x^*)].
\]
\end{corollary}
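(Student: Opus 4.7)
The plan is to mirror the structure of Corollary \ref{linconv:SSM}: invoke Theorem \ref{maintheorem} with $\eta_k = \eta^{\SSBB}_k$, then substitute the a priori bounds on $\eta^{\SSBB}_k$ from Lemma \ref{etabound:SSBB}. The only real work is an arithmetic simplification that turns the generic rate $\rho_k$ of the theorem into the closed-form expression claimed here.

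First I would verify that $\eta^{\SSBB}_k$ satisfies the hypothesis $\eta_k < \min(b/(4L),1/L)$ required by Theorem \ref{maintheorem}. By Lemma \ref{etabound:SSBB} we have $\eta^{\SSBB}_k \le b/(m\mu)$, so the condition $b/(m\mu) < b/(4L)$ is equivalent to $m > 4\kappa$, and $b/(m\mu) < 1/L$ is equivalent to $m > b\kappa$. Together these give exactly the hypothesis $m > \max(4\kappa, b\kappa)$. This also ensures the denominator $b - 4L\eta^{\SSBB}_k$ is positive, so the expression for $\rho_k$ in Theorem \ref{maintheorem} is well-defined.

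Next I would bound each of the two summands of $\rho_k$ separately using the extremes from Lemma \ref{etabound:SSBB}. For the first term $\frac{b}{m\mu\eta_k(b-4L\eta_k)}$, I would use the lower bound $\eta^{\SSBB}_k \ge b/(mL)$ inside the factor $m\mu\eta_k$ (which gives $m\mu\eta_k \ge b/\kappa$) and the upper bound $\eta^{\SSBB}_k \le b/(m\mu)$ inside the factor $b - 4L\eta_k$ (which gives $b-4L\eta_k \ge b(m-4\kappa)/m$), yielding an upper bound of $\kappa m/(mb - 4\kappa b)$. For the second term $\frac{4(m+1)L\eta_k}{m(b-4L\eta_k)}$, both factors push in the same direction, so I would use the upper bound $\eta^{\SSBB}_k \le b/(m\mu)$ throughout, producing $\frac{4(m+1)\kappa b/m}{mb - 4\kappa b} = \frac{4\kappa b(1+1/m)}{mb-4\kappa b}$. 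Adding the two bounds yields exactly
\[
\rho^{\SSBB}_k \le \frac{\kappa m + 4\kappa b(1+1/m)}{mb - 4\kappa b} = \rho.
\]

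Finally, provided the hypothesis on $m$ and $b$ also forces $\rho < 1$, the recursive application of the one-step inequality in Theorem \ref{maintheorem} delivers the stated geometric decay $\mathbb{E}[f(x_k) - f(x^*)] \le \rho^k[f(x_0)-f(x^*)]$. I do not anticipate any real obstacle: the substitution is mechanical, and the only subtlety is choosing which of the two bounds in Lemma \ref{etabound:SSBB} to apply in each factor so that the resulting bound collapses to the compact closed form stated in the corollary. The proof can therefore be abbreviated with a pointer to the analogous argument already given for Corollary \ref{linconv:SSM}.
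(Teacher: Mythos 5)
Your proposal is correct and matches the paper's own proof essentially verbatim: both invoke Theorem~\ref{maintheorem} with $\eta_k = \eta^{\SSBB}_k$, check the step-size conditions $\eta^{\SSBB}_k < b/(4L)$ and $\eta^{\SSBB}_k < 1/L$ via the upper bound $b/(m\mu)$ from Lemma~\ref{etabound:SSBB} (giving $m > \max(4\kappa, b\kappa)$), and then substitute the lower bound $b/(mL)$ into the factor $m\mu\eta_k$ and the upper bound $b/(m\mu)$ elsewhere to obtain $\rho^{\SSBB}_k \le \rho$. Your explicit accounting of which bound goes into which factor is in fact slightly more careful than the paper's one-line substitution.
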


\begin{proof}
Because SSBB is a special case of Algorithm~\ref{alg:minibatchSVRG}, then we can easily get
\[
\mathbb{E}[f(x_{k+1}) - f(x^*)] \le \left[\frac{b}{m\mu\eta^{\SSBB}_k(b - 4L\eta^{\SSBB}_k)} + \frac{4(m+1)L\eta^{\SSBB}_k}{m(b-4L\eta^{\SSBB}_k)}\right] \mathbb{E}[f(x_k) - f(x^*)]
\]
when $\eta^{\SSBB}_k < b/(4L)$ and $\eta^{\SSBB}_k < 1/L$. From Lemma~\ref{etabound:SSBB}, this is valid for $m > \max(4\kappa, b\kappa)$. Also from Lemma~\ref{etabound:SSBB}, we have
\begin{align*}
    \rho^{\SSBB}_k &= \frac{b}{m\mu\eta^{\SSBB}_k(b - 4L\eta^{\SSBB}_k)} + \frac{4(m+1)L\eta^{\SSBB}_k}{m(b-4L\eta^{\SSBB}_k)} \\
    &\le \frac{b}{m\mu\frac{b}{mL}(b - 4L\frac{b}{m\mu})} + \frac{4(m+1)L\frac{b}{m\mu}}{m(b - 4L\frac{b}{m\mu})}\\
    & = \frac{\kappa m + 4\kappa b(1 + 1/m)}{mb - 4\kappa b}
\end{align*}
Hence if $m$ and $b$ are chosen so that $\rho < 1$, we have
\[
\mathbb{E}[f(x_k) - f(x^*)]\le \rho^k[f(x_0) - f(x^*)]
\]
as required.
\end{proof}
\textbf{Remark:} From Corollary~\ref{linconv:SSM} and \ref{linconv:SSBB}, the total computational complexity of gradients are both $O\bigl((n + \kappa^2)\log(1/\epsilon)\bigr)$. Actually, in Corollary~\ref{linconv:SSBB}, if the coefficient of the SSBB learning rate is replaced by $b/m^\alpha$, the proof will give $O\bigl((n + \kappa^2)\log(1/\epsilon)\bigr)$ complexity for any choice of $\alpha\in[0,1]$, which matches the result in Corollary~\ref{linconv:SSM}. Additionally, linear convergence rate can not be preserved if $b/m$ is replaced by $1/m$, since $\rho < 1$ can not be guaranteed.

\section{Proximal variant}\label{sec:nondiff}

As shown in \cite{xiao2014proximal}, SGD and SVRG may be easily extended to cover nondifferentiable objective functions of the form
\begin{equation}\label{prox large sum}
F(x) = f(x) + R(x) = \frac{1}{n}\sum_{i=1}^nf_i(x) + R(x),
\end{equation}
where $f$ satisfies Assumption~\ref{ass} and $R$ is a nondifferentiable function such as $R(x) = \|x\|_1$. In this section we will see that SSBB may likewise be extended, and the linear convergence is preserved.

To solve \eqref{prox large sum}, the proximal gradient method does
\[
x_k = \prox_{\eta R}\bigl(x_{k-1} - \eta \grad f(x)\bigr),
\]
with a \emph{proximal map}  defined by
\[
\prox_{R}(y) = \argmin_{x\in\mathbb{R}^d}\biggl\{\frac{1}{2}\|x - y\|^2 + R(x)\biggr\}.
\]
As in \cite{xiao2014proximal}, we replace the update rule  $x_{k,t+1} = x_{k,t} - \eta^{\SSBB}_kv_{k,t}$ in Algorithm~\ref{alg:SSBB} by
\begin{equation}\label{eq:prox}
x_{k,t+1} = \prox\bigl(x_{k,t} - \eta^{\SSBB}_kv_{k,t}\bigr).
\end{equation}
We will see that the resulting algorithm, which we will call prox-SSBB, remains linearly convergent as long as the following assumption holds.
\begin{assumption}\label{ass2}
The function $R$ is $\mu$-strongly convex in the sense that
\[
R(y) \ge R(x) + g(x)^\tp(y-x) + \frac{\mu}{2}\|y - x\|^2
\]
for all $x\in \dom(R)$, $g(x)\in\partial R(x)$, $y\in\mathbb{R}^d$, and $R(y) \coloneqq +\infty$ whenever $y \notin \dom(R)$. Here $\partial R(x)$ denotes subgradient at $x$.
\end{assumption}
It is a standard fact \cite[p.~340]{Rock} that if $R$ is a closed convex function on $\mathbb{R}^d$, then
\begin{equation}\label{Nonexpansiveness}
\|\prox_R(x) - \prox_R(y)\| \le \|x - y\|
\end{equation}
for all $x,y\in\text{dom}(R)$. We will write $\mu_f$ for the convexity parameter of $f$ in Assumption~\ref{ass} and  $\mu_R$ for that of $R$ in Assumption~\ref{ass2}. This implies that the overall objective function $F$ is strongly convex with $\mu \ge \mu_f + \mu_R$.

To establish linear convergence for prox-SSBB, we need an analogue of Lemma~\ref{restbound}, which is provided by \cite[Lemma~3]{xiao2014proximal}, reproduced here for easy reference.
\begin{lemma}[Xiao--Zhang]\label{prox restbound}
Let $f$ be as in Assumption~\ref{ass}, $R$ as in Assumptions~\ref{ass}, and $F = f + R$ with $x^*\coloneqq \argmin_{x} F(x)$.  Let $\Delta_{k,t} \coloneqq v_{k,t} - \grad f(x_{k,t})$ and
\[
g_{k,t} \coloneqq \frac{1}{\eta_k}(x_{k,t} - x_{k,t+1}) = \frac{1}{\eta_k}\bigl(x_{k,t} - \prox_{\eta_kR}(x_{k,t} - \eta_k v_{k,t})\bigr).
\]
If $0 < \eta_k < 1/L$, then
\begin{multline*}
F(x^*) \ge  \ F(x_{k,t+1}) + g_{k,t}^\tp(x^* - x_{k,t}) + \frac{\eta_k}{2}\|g_{k,t}\|^2\\
 + \frac{\mu_f}{2}\|x_{k,t} - x^*\|^2 + \frac{\mu_R}{2}\|x_{k,t+1} - x^*\|^2 + \Delta_{k,t}^\tp(x_{k,t+1} - x^*).
\end{multline*}
\end{lemma}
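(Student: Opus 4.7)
The proof will parallel that of Lemma~\ref{restbound}, with the extra ingredient being a subgradient of $R$ at $x_{k,t+1}$ extracted from the first-order optimality condition of the proximal map. I would produce three lower bounds for $F(x^*) = f(x^*) + R(x^*)$ and sum them.

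First, I apply the $\mu_f$-strong convexity of $f$ at the point $x_{k,t}$ to obtain
\[
f(x^*) \ge f(x_{k,t}) + \grad f(x_{k,t})^\tp(x^* - x_{k,t}) + \tfrac{\mu_f}{2}\|x^* - x_{k,t}\|^2.
\]
Second, since each $\grad f_i$ is $L$-Lipschitz, so is $\grad f$, and the standard descent inequality yields
\[
f(x_{k,t}) \ge f(x_{k,t+1}) - \grad f(x_{k,t})^\tp(x_{k,t+1} - x_{k,t}) - \tfrac{L}{2}\|x_{k,t+1} - x_{k,t}\|^2.
\]
Third, and this is the key new step, I invoke the first-order optimality condition for the minimizer defining $x_{k,t+1} = \prox_{\eta_k R}(x_{k,t} - \eta_k v_{k,t})$:
\[
0 \in x_{k,t+1} - (x_{k,t} - \eta_k v_{k,t}) + \eta_k\,\partial R(x_{k,t+1}),
\]
which exhibits $g_{k,t} - v_{k,t}$ as an element of $\partial R(x_{k,t+1})$. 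Plugging this subgradient into Assumption~\ref{ass2} at the point $x_{k,t+1}$ gives
\[
R(x^*) \ge R(x_{k,t+1}) + (g_{k,t} - v_{k,t})^\tp(x^* - x_{k,t+1}) + \tfrac{\mu_R}{2}\|x^* - x_{k,t+1}\|^2.
\]

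I then add the three inequalities. The left-hand side collapses to $F(x^*)$, while $f(x_{k,t+1}) + R(x_{k,t+1}) = F(x_{k,t+1})$ appears on the right. Using the identity $x_{k,t+1} - x_{k,t} = -\eta_k g_{k,t}$, I combine the terms linear in $\grad f(x_{k,t})$ to get $\grad f(x_{k,t})^\tp(x^* - x_{k,t+1})$, and then merge with $-v_{k,t}^\tp(x^* - x_{k,t+1})$ from the $R$-inequality to produce $\Delta_{k,t}^\tp(x_{k,t+1} - x^*)$ by the very definition $\Delta_{k,t} = v_{k,t} - \grad f(x_{k,t})$. The remaining $g_{k,t}^\tp(x^* - x_{k,t+1})$ is rewritten via $x^* - x_{k,t+1} = (x^* - x_{k,t}) + \eta_k g_{k,t}$ to produce $g_{k,t}^\tp(x^* - x_{k,t}) + \eta_k\|g_{k,t}\|^2$. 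The squared term $\|x_{k,t+1}-x_{k,t}\|^2 = \eta_k^2\|g_{k,t}\|^2$ combines with the preceding display to leave a coefficient $\eta_k - \tfrac{L\eta_k^2}{2}$ on $\|g_{k,t}\|^2$; the hypothesis $\eta_k \le 1/L$ then bounds this below by $\eta_k/2$, which is exactly the coefficient appearing in the lemma.

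The main obstacle, and the only place the proof goes beyond the smooth case of Lemma~\ref{restbound}, is the identification of $g_{k,t} - v_{k,t}$ as a subgradient of $R$ at the \emph{post-prox} iterate $x_{k,t+1}$; this is what allows $\mu_R$-strong convexity to be activated at $x_{k,t+1}$ rather than at $x_{k,t}$, explaining why the $\mu_R$-term in the conclusion is anchored at $x_{k,t+1}$ while the $\mu_f$-term is anchored at $x_{k,t}$. Once this observation is in hand the remaining manipulation is routine algebra of the sort already performed in the proof of Lemma~\ref{restbound}.
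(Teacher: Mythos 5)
Your argument is correct: the identification of $g_{k,t}-v_{k,t}\in\partial R(x_{k,t+1})$ from the prox optimality condition, combined with strong convexity of $f$ at $x_{k,t}$, the $L$-smoothness descent inequality, and $\mu_R$-strong convexity of $R$ at $x_{k,t+1}$, assembles exactly into the stated bound once you use $x_{k,t+1}-x_{k,t}=-\eta_k g_{k,t}$ and $\eta_k\le 1/L$. The paper does not prove this lemma but merely reproduces it from Xiao--Zhang, and your proof is precisely the standard argument from that reference, paralleling the paper's own proof of Lemma~\ref{restbound} in the way you describe.
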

\begin{corollary}[Linear convergence of prox-SSBB]
Let $F$ and $x^*$ be as in Lemma~\ref{prox restbound} and $\eta_k =\eta^{\SSBB}_k$.  Then Corollary~\ref{linconv:SSBB} holds verbatim with $F$ in place of $f$.
\end{corollary}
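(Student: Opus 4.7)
The plan is to mirror the proof of Theorem~\ref{maintheorem} almost verbatim, substituting the proximal descent inequality Lemma~\ref{prox restbound} for Lemma~\ref{restbound}, and then closing out using the bounds on $\eta^{\SSBB}_k$ in Lemma~\ref{etabound:SSBB} exactly as in Corollary~\ref{linconv:SSBB}. Since by the definition of $g_{k,t}$ in Lemma~\ref{prox restbound} we have $x_{k,t+1} = x_{k,t} - \eta_k g_{k,t}$, expanding $\|x_{k,t+1} - x^*\|^2$ and combining it with Lemma~\ref{prox restbound} would yield the proximal analogue of \eqref{distancebound}, namely
\[
\|x_{k,t+1} - x^*\|^2 \le \|x_{k,t} - x^*\|^2 + 2\eta_k[F(x^*) - F(x_{k,t+1})] - 2\eta_k\Delta_{k,t}^\tp(x_{k,t+1} - x^*),
\]
where the strong convexity constants $\mu_f,\mu_R$ are nonnegative and can be discarded to give a clean rate of the same form (since $\mu \ge \mu_f + \mu_R$; retaining them would only sharpen the result).

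The critical step, as in \cite{xiao2014proximal}, is to handle $\Delta_{k,t}^\tp(x_{k,t+1}-x^*)$ using the auxiliary point $\bar{x}_{k,t+1} \coloneqq \prox_{\eta_k R}(x_{k,t} - \eta_k\grad f(x_{k,t}))$, which is measurable with respect to the sigma-algebra generated by $x_{k,t}$ and $x_k$ (it does not depend on the minibatch $S_{k,t}$). Nonexpansiveness \eqref{Nonexpansiveness} gives $\|x_{k,t+1}-\bar{x}_{k,t+1}\| \le \eta_k\|\Delta_{k,t}\|$, so the decomposition
\[
-2\eta_k\Delta_{k,t}^\tp(x_{k,t+1}-x^*) = -2\eta_k\Delta_{k,t}^\tp(x_{k,t+1}-\bar{x}_{k,t+1}) - 2\eta_k\Delta_{k,t}^\tp(\bar{x}_{k,t+1} - x^*)
\]
can be bounded above by $2\eta_k^2\|\Delta_{k,t}\|^2 - 2\eta_k\Delta_{k,t}^\tp(\bar{x}_{k,t+1}-x^*)$, and the latter term vanishes in conditional expectation since $\mathbb{E}[\Delta_{k,t}\mid x_{k,t},x_k] = 0$. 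This is the precise analogue of the identity used in the proof of Theorem~\ref{maintheorem}, where there $\bar{x}_{k,t+1} = x_{k,t} - \eta_k\grad f(x_{k,t})$ plays the same role without needing a $\prox$.

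With this in hand, taking expectations, invoking Lemma~\ref{stobound} to bound $\mathbb{E}\|\Delta_{k,t}\|^2$ by $(4L/b)[F(x_{k,t}) - F(x^*) + F(x_k) - F(x^*)]$, summing over $t=0,\dots,m-1$, and applying strong convexity through $\|x_k - x^*\|^2 \le (2/\mu)[F(x_k) - F(x^*)]$ would reproduce, word for word with $F$ replacing $f$, the recursion
\[
\mathbb{E}[F(x_{k+1}) - F(x^*)] \le \left[\frac{b}{m\mu\eta^{\SSBB}_k(b - 4L\eta^{\SSBB}_k)} + \frac{4(m+1)L\eta^{\SSBB}_k}{m(b-4L\eta^{\SSBB}_k)}\right]\mathbb{E}[F(x_k) - F(x^*)]
\]
valid under $\eta^{\SSBB}_k < \min(b/(4L),1/L)$. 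Applying Lemma~\ref{etabound:SSBB} to the two-sided bounds on $\eta^{\SSBB}_k$ exactly as in the proof of Corollary~\ref{linconv:SSBB} would give the same admissibility condition $m > \max(4\kappa, b\kappa)$ and the same contraction factor $\rho = [\kappa m + 4\kappa b(1+1/m)]/(mb - 4\kappa b)$, whence iterating yields $\mathbb{E}[F(x_k) - F(x^*)] \le \rho^k [F(x_0) - F(x^*)]$ as required.

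The main obstacle is the $\Delta_{k,t}$ handling in paragraph two, since the nonlinear coupling of $x_{k,t+1}$ to $v_{k,t}$ through $\prox$ breaks the clean identity $x_{k,t+1} - \bar{x}_{k,t+1} = -\eta_k\Delta_{k,t}$ available in the smooth case; one instead needs the nonexpansiveness bound plus the Cauchy--Schwarz-style application above. Once this is in place, the rest is routine bookkeeping that is structurally identical to Theorem~\ref{maintheorem} and Corollary~\ref{linconv:SSBB}.
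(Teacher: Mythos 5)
Your proposal is correct and follows essentially the same route as the paper's proof: expand $\|x_{k,t+1}-x^*\|^2$ via $x_{k,t+1}=x_{k,t}-\eta_k g_{k,t}$, apply Lemma~\ref{prox restbound} in place of Lemma~\ref{restbound}, and control $\Delta_{k,t}^\tp(x_{k,t+1}-x^*)$ through the auxiliary point $\bar{x}_{k,t+1}=\prox_{\eta_k R}(x_{k,t}-\eta_k\grad f(x_{k,t}))$ using nonexpansiveness \eqref{Nonexpansiveness} and Cauchy--Schwarz, exactly as the paper does, before closing out with the bookkeeping of Theorem~\ref{maintheorem} and Corollary~\ref{linconv:SSBB}. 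The obstacle you flag --- that the prox destroys the identity $x_{k,t+1}-\bar{x}_{k,t+1}=-\eta_k\Delta_{k,t}$ and must be replaced by the nonexpansiveness bound --- is precisely the one nontrivial modification the paper makes.
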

\begin{proof}
To apply Lemma~\ref{prox restbound}, we need $\eta_k\le 1/L$ and this holds as we have $c(b) \le \mu/L$ among the assumptions of Lemma~\ref{etabound:SSBB}. In the notations of Lemma~\ref{prox restbound}, the update \eqref{eq:prox} is equivalent to $x_{k,t+1} = x_{k,t} - \eta_kg_{k,t}$. So
\[
\|x_{k,t+1} - x^*\|^2 = \|x_{k,t} - x^*\|^2 - 2\eta_kg_{k,t}^\tp (x_{k,t} - x^*) + \eta_k^2\|g_{k,t}\|^2.
\]
By Lemma~\ref{prox restbound}, we have
\begin{multline*}
- g_{k,t}^\tp (x_{k,t} - x^*) + \frac{\eta_k}{2}\|g_{k,t}\|^2\\
\le \ F(x^*) - F(x_{k,t+1}) - \frac{\mu_f}{2}\|x_{k,t} - x^*\|^2 - \frac{\mu_R}{2}\|x_{k,t+1}-x^*\|^2 - \Delta_{k,t}^\tp(x_{k,t+1} - x^*).
\end{multline*}
Therefore,
\[
\|x_{k,t+1} - x^*\|^2 \le \|x_{k,t} - x^*\|^2 - 2\eta_k\Delta_{k,t}^\tp(x_{k,t+1} - x^*) + 2\eta_k[F(x^*) - F(x_{k,t+1})].
\]
We bound the middle term on the right. Let $\bar{x}_{k,t+1} \coloneqq  \prox_{\eta_kR}(x_{k,t} - \eta_k\nabla f(x_{k,t}))$. Then
\[
\begin{aligned}
- 2\eta_k\Delta_{k,t}^\tp(x_{k,t+1} - x^*) & = -2\eta_k\Delta_{k,t}^\tp(x_{k,t+1} - \bar{x}_{k,t+1}) - 2\eta_k\Delta_{k,t}^\tp(\bar{x}_{k,t+1} - x^*)\\
& \le 2\eta_k\|\Delta_{k,t}\|\|x_{k,t+1} - \bar{x}_{k,t+1}\| - 2\eta_k\Delta_{k,t}^\tp(\bar{x}_{k,t+1} - x^*)\\
& \le 2\eta_k\|(x_{k,t} - \eta_kv_{k,t}) - (x_{k,t} - \eta_k\nabla f(x_{k,t}))\| - 2\eta_k\Delta_{k,t}^\tp(\bar{x}_{k,t+1} - x^*)\\
& = 2\eta_k^2\|\Delta_{k,t}\|^2 - 2\eta_k\Delta_{k,t}^\tp(\bar{x}_{k,t+1} - x^*),
\end{aligned}    
\]
where the first inequality is Cauchy--Schwarz and the second follows from Lemma~\ref{Nonexpansiveness}. The remaining steps are as in the proofs of Theorem~\ref{maintheorem} and Corollary~\ref{linconv:SSBB} with $F$ in place of $f$.
\end{proof}

\section{Numerical Experiments}\label{sec:num}

As mentioned earlier, our method of choice is Algorithm~\ref{alg:SSBB}, the stochastic Steffensen--Barzilai--Borwein method (SSBB)  with minibatching. We will compare it with several benchmarking algorithms: stochastic gradient descent (SGD), stochastic variance reduced gradient (SVRG) \cite{johnson2013accelerating}, stochastic LBFGS \cite{moritz2016linearly}, and the first two with Barzilai--Borwein step size (SGD--BB and SVRG--BB) \cite{tan2016barzilai}. We tests these algorithms on popular empirical risk minimization problems --- ridge regression, logistic regression and support vector machines with squared hinge loss --- on standard datasets in \texttt{LIBSVM}. The parameters involved are summarized in Table~\ref{information}. Our experiments show that SSBB compares favorably with these benchmark algorithms. All our codes are available at \url{https://github.com/Hs-DeeMo/stochastic-steffensen}.
\begin{table}[htb]
\centering
  \begin{tabular}{lllllll}
    \toprule
    Data set     & Loss function     & $n$ & $d$ & $m$  & $b$ & $\lambda$\\
    \midrule
    Synthetic & Squared loss  & $5000$  & $100$ & $2n$  & $4$   & $10^{-4}$\\
    \texttt{w6a}     & Logistic loss & $17188$ & $300$ & $n$ & $16$    & $10^{-4}$ \\
    \texttt{a6a}     & Squared hinge loss  & $11220$ & $123$ & $n$ & $16$ & $10^{-3}$\\
    \bottomrule
  \end{tabular}
\medskip
  \caption{List of experiments. Sample size $n$, dimension $d$, batch size $b$, regularization parameter $\lambda$.}
  \label{information}
\end{table}

For a fair comparison, all algorithms are minibatched. We set a batch size of $b = 4$ for ridge regression, $b = 16$ for logistic loss and squared hinge loss. The inner loop size is set at $m = 2n$ or $n$ according to Table~\ref{information}. The learning rates in SGD, SVRG, and SLBFGS are hyperparameters that require separate tuning; we pick the best possible values with a grid search. SLBFGS requires more hyperparameters: As suggested by the authors of \cite{moritz2016linearly}, we set the Hessian update interval to be $L = 10$, Hessian batch size to be $b_H= L b$, and memory length to be $M = 10$.  All experiments are initialized with $x_0 =0$. We repeat every experiment ten times and report average results.

In all figures, we present the convergence trajectory of each method. The vertical axis represents in log scale the value $f(x_k) - f(x^*)$ where we estimate $f(x^*)$ by running full gradient descent or Newton method multiple times. The horizontal axis represents computational cost as measured by either number of gradient computations divided by $n$ or the actual running time --- we present both. In all experiments, we note that the convergence trajectories of SSBB agree with the linear convergence established in Section~\ref{sec:conv}. 

\subsection{Ridge Regression} Figure~\ref{ridge} shows a simple ridge regression on a synthetic dataset generated in a controlled way to give us the true global solution. We generate $x^* \in \mathbb{R}^d$ with $x^*_i\sim\mathcal{N}(0,1)$ and $A\in\mathbb{R}^{n\times d}$ with $a_{ij}  \sim\mathcal{N}(0,1)$. We form $y = Ax^* + b$ with $b$ an $n$-dimensional standard normal variate. We then attempt to recover $x^*$ from $A$ and $y$ by optimizing, with $\lambda = 10^{-4}$,
\[
\min_{x\in\mathbb{R}^d}\frac{1}{n}\sum_{i=1}^n\|y - Ax\|^2 + \frac{\lambda}{2}\|x\|^2.
\]

\begin{figure}[htb]
\centering
\includegraphics[width=0.48\columnwidth,trim=0 0 30 30, clip]{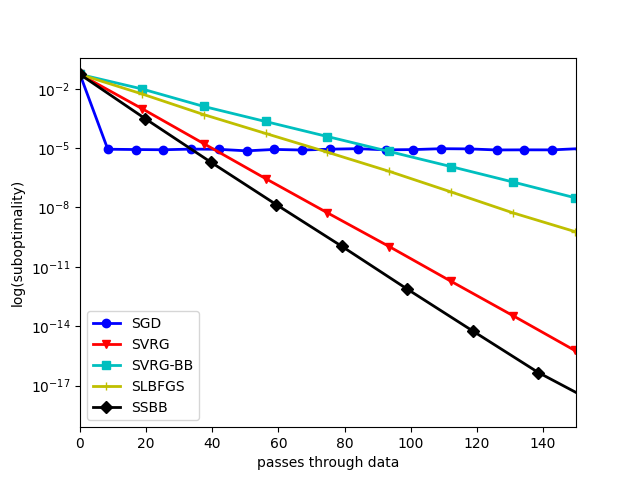}
\includegraphics[width=0.48\columnwidth,trim=0 0 30 30, clip]{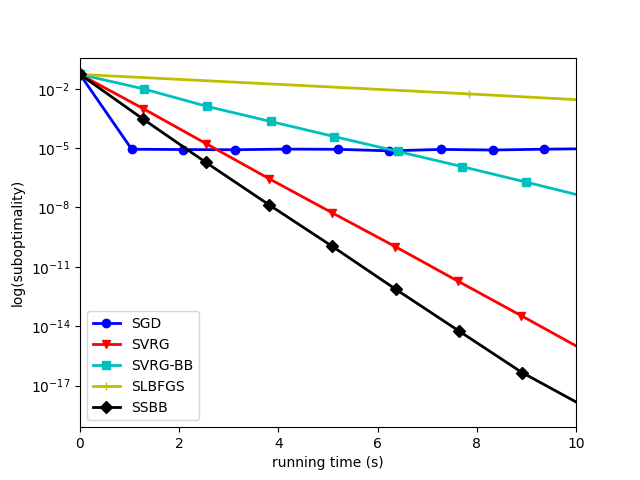}
\caption{Ridge regression on synthetic dataset with regularization parameter $\lambda = 10^{-4}$. \emph{Left:} number of passes through data. \emph{Right:} running time.}
\label{ridge}
\end{figure}

\subsection{Logistic Regression}
Figure~\ref{logistic} shows the results of a binary classification problem on the on \texttt{w6a} dataset from \texttt{LIBSVM} using an $l^2$-regularized binary logistic regression. The associated optimization problem with regularization $\lambda = 10^{-4}$ and labels $y_i\in \{-1, +1\}$ is
\[
\min_{x\in\mathbb{R}^d}\frac{1}{n}\sum_{i=1}^n\log\bigl(1 + e^{-y_i(a_i^\tp x)}\bigr) + \frac{\lambda}{2}\|x\|^2.
\]

\begin{figure}[htb]
\centering
\includegraphics[width=0.48\columnwidth,trim=0 0 30 30,clip]{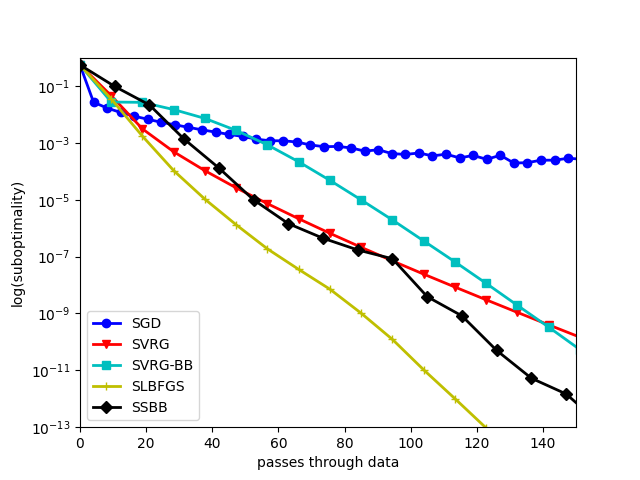}
\includegraphics[width=0.48\columnwidth,trim=0 0 30 30,clip]{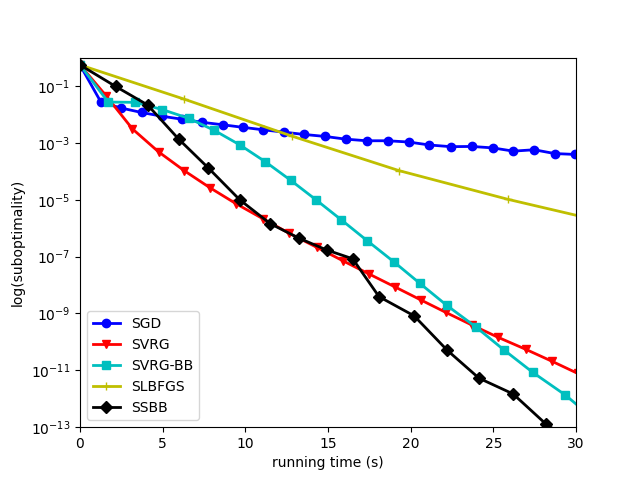}
\caption{$l^2$-regularized logistic regression on \texttt{w6a} dataset from \texttt{LIBSVM} with regularization parameter $\lambda = 10^{-4}$. \emph{Left:} number of passes through data. \emph{Right:} running time.}
\label{logistic}
\end{figure}

\subsection{Squared Hinge Loss}
Figure~\ref{hinge} shows the results of a support vector machine classifier with $l^2$-regularized squared hinge loss and $\lambda=10^{-3}$ on the \texttt{a6a} dataset from \texttt{LIBSVM}. The optimization problem in this case is
\[
\min _{x\in\mathbb{R}^d} \frac{1}{n} \sum_{i=1}^{n}[(1-y_{i} a_{i}^\tp  x)_{+}]^2+\frac{\lambda}{2}\|x\|^2.
\]

\begin{figure}[htb]
\centering
\includegraphics[width=0.48\columnwidth,trim=0 0 30 30,clip]{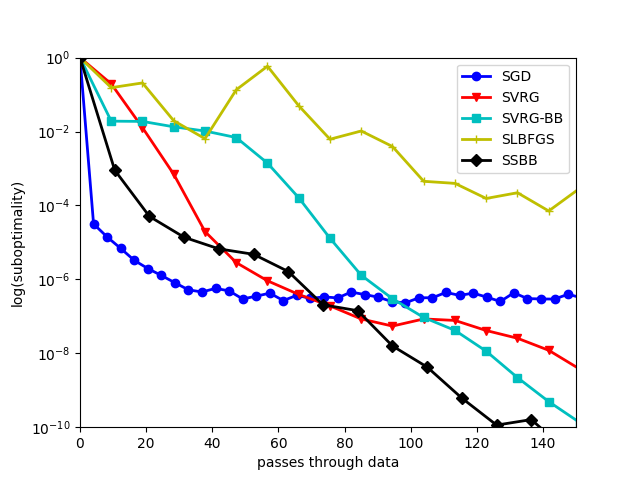}
\includegraphics[width=0.48\columnwidth,trim=0 0 30 30,clip]{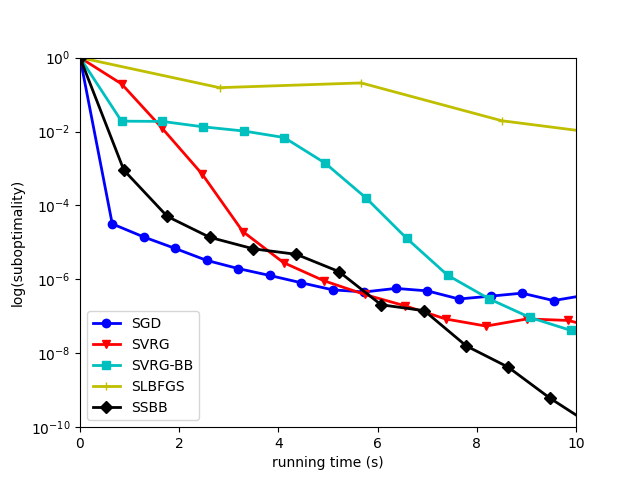}
\caption{$l^2$-regularized squared hinge loss on \texttt{a6a} from \texttt{LIBSVM} with regularization parameter $\lambda = 10^{-3}$. \emph{Left:} number of passes through data. \emph{Right:} running time.}
\label{hinge}
\end{figure}

The results are clear: SSBB solves the problems to high levels of accuracy and is the fastest, whether measured by running time or by number of passes through data, in all but one case. The only exception  is shown on the left of Figure~\ref{logistic}, where SLBFGS is better when measured by the number of passes through data. But even in this case, SSBB is still the second best. Moreover, when  measured in actual running time as shown on the right of Figure~\ref{logistic}, SSBB becomes the fastest whereas SLBFGS drops to the fourth place. This is consistent with our discussion in Section~\ref{sec:intro}, namely, SLBFGS incurs additional computational cost due to its matrix-vector products that SSBB completely avoids. For the other two experiments in Figures~\ref{ridge} and \ref{hinge}, SSBB beats all methods in both measures of speed.

\section{Conclusion}

The stochastic Steffensen methods introduced in this article are (i) simple to implement, (ii) efficient to compute, (iii) easy to incorporate, (iv) tailored for massive data and high dimensions, have (v) minimal memory requirements and (vi) a negligible number of hyperparameters to tune. The last point is in contrast to more sophisticated methods involving moments \cite{duchi2011adaptive,hinton2012neural,kingma2014adam} or momentum \cite{Nesterov,Poljak,qian1999momentum,reddi2019convergence}, which require heavy tuning of many more hyperparameters. SSM and SSBB require just two --- minibatch size $b$ and inner loop size $m$. In fact, since we typically set $ m = \lfloor n / b\rfloor$, there is really just one hyperparameter $b$ to be tuned.

The point (iii) also deserves special mention. Since SSM and SSBB are ultimately encapsulated in the respective learning rates $\eta_k^{\SS}$ and $\eta_k^{\SSBB}$, they are versatile enough to be  incorporated into other methods such as those in \cite{duchi2011adaptive,hinton2012neural,kingma2014adam,Nesterov,Poljak,qian1999momentum,reddi2019convergence}, assuming that we are willing to pay the price in hyperparameters tuning. We hope to explore this in future work.

\subsection*{Acknowledgment}

This work is partially supported by DARPA HR00112190040, NSF DMS-1854831, and the Eckhardt Faculty Fund. LHL would like to thank Junjie Yue for helpful discussions.

\bibliographystyle{abbrv}
\bibliography{Steffensen}

\end{document}